\newcommand\C{{\cal C}}
\renewcommand\O{{\cal O}}
\renewcommand\P{{\cal P}}
\renewcommand\S{{\cal S}}
\newcommand\K{{\cal K}}
\newcommand\PG{{\mbox{\textrm PG}}}
\newcommand\PGL{{\mbox{PGL}}}
\newcommand\GF{{\mbox{GF}}}
\newcommand\V{{\cal V}}
\newcommand\D{{\cal D}}
\newcommand\B{{\cal B}}
\newcommand\R{{\cal R}}
\newcommand\X{{\cal X}}
\newcommand\N{{\cal N}}
\newcommand{\li}{\ell_\infty}
\newcommand{\si}{\Sigma_\infty}
 \newcommand{\Label}{\label}
 \newtheorem{theorem}{Theorem}
\newtheorem{thm}[theorem]{Theorem}
\newtheorem{result}[theorem]{Result}
\newtheorem{lemma}[theorem]{Lemma}
\newtheorem{corollary}[theorem]{Corollary}
\newtheorem{defn}[theorem]{Definition}
\newenvironment{proof}{\noindent{\bfseries Proof}\hspace{0.5em}}{ \null  \hfill $\square$ \par}
\newcommand{\Fq}{\mathbb F_q}
\begin{document}

\title{A characterisation of Baer subplanes}

\author{S.G. Barwick and Wen-Ai Jackson
\\ School of Mathematical Sciences, University of Adelaide\\
Adelaide 5005, Australia
}

\maketitle
 
 AMS code: 51E20

Keywords: projective geometry, ruled cubic surface, Baer subplanes, Bruck-Bose representation

\begin{abstract} Let $\K$ be a set of $q^2+2q+1$ points in $\PG(4,q)$. We show that if every 3-space meets $\K$ in either one, two or three lines,  a line and a non-degenerate conic, or a twisted cubic, then $\K$  is a ruled cubic surface. Moreover, $\K$ corresponds  via the Bruck-Bose representation   to a tangent Baer subplane of $\PG(2,q^2)$. We use this to prove a characterisation  in $\PG(2,q^2)$ of a set of points $\B$ as a tangent Baer subplane by looking at the intersections of $\B$ with Baer-pencils. 
\end{abstract}

\section{Introduction}

A ruled cubic surface in $\PG(4,q)$ is a variety $\V^3_2$ which rules a line and a conic according to a projectivity (see Section~\ref{2.1} for more details). By \cite[Lemma 2.1]{quinn-conic}, a hyperplane of $\PG(4,q)$ meets a ruled cubic surface in either one, two or three lines, a line and a non-degenerate conic, or a twisted cubic. We show that the converse holds, and prove the following characterisation. 

\begin{thm}\Label{char1} Let $\K$ be a set of $q^2+2q+1$ points in $\PG(4,q)$, $q\geq 3$. Then $\K$ is a ruled cubic surface if and only if   every 3-space meets $
\K$ in either one, two or three lines, a line and a non-degenerate conic, or a twisted cubic. 
\end{thm}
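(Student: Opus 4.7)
The forward direction is \cite[Lemma 2.1]{quinn-conic}; for the converse, assume $\K$ satisfies the stated intersection properties. My strategy has three stages: first locate the line directrix $\ell \subseteq \K$; then recover the $q+1$ generator lines through points of $\ell$ that partition $\K \setminus \ell$; and finally find the conic directrix $\C$ in a plane skew to $\ell$ and verify that the map $\ell \to \C$ given by the generators is a projectivity.

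First I would observe that $\K$ spans $\PG(4,q)$, since any hyperplane meets $\K$ in at most $3(q+1)$ points, which is strictly less than $q^2+2q+1$ precisely when $q \geq 3$. I would then count the total number of lines of $\K$ by double-counting pairs (line of $\K$, hyperplane containing it), weighting each hyperplane by the number of lines in its $\K$-intersection. For a ruled cubic surface this total is $q+2$, and I expect to establish this here. To isolate the line directrix $\ell$, I would show that exactly one line of $\K$ has the property that every hyperplane through it meets $\K$ in a union of lines only (types (a), (b), or (c)), never in a configuration containing a conic or a twisted cubic; this line must be $\ell$, while each of the remaining $q+1$ lines lies in some hyperplane of type (d) and is a generator.

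Given $\ell$, the hyperplanes through $\ell$ of types (b) and (c) yield the generator lines meeting $\ell$, and point-counting shows the $q+1$ generators meet $\ell$ at its $q+1$ distinct points and partition $\K \setminus \ell$. To produce the conic directrix I would select a type (d) hyperplane $\Sigma$ and argue that its conic component lies in a plane $\pi$ skew to $\ell$: since $\ell \not\subset \Sigma$, we have $\ell \cap \Sigma$ a single point, and a short check places $\pi$ off this point. I would then verify that each generator meets $\pi$ in a single point of $\C$, yielding the bijection $\ell \to \C$ through the generators; examining the $q+1$ hyperplanes through $\pi$ (each of type (d) with explicit structure) finally shows this bijection is a projectivity, identifying $\K$ with the ruled cubic surface joining $\ell$ and $\C$.

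The principal obstacle is singling out the directrix: the intersection data a priori treats all lines of $\K$ symmetrically, so distinguishing $\ell$ from the generators requires careful enumeration of hyperplane intersection types through each line of $\K$, which I expect to be the combinatorial heart of the proof.
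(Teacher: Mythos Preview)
Your plan reverses the paper's order of discovery. The paper first proves that $\K$ contains a non-degenerate conic (Lemma~\ref{contains-arc}, a substantial case analysis ruling out in turn the possibilities that $\K$ has no lines, exactly one line, two meeting lines, or only pairwise skew lines), then uses the $q+1$ hyperplanes through the plane of that conic to extract $q+1$ mutually skew generator lines (the ``sticks'', Lemma~\ref{lem:sticks}), and only afterwards locates the directrix as the unique additional line of $\K$, which necessarily meets every stick (Lemma~\ref{lem:base}). Your proposal to isolate the directrix first via a double count of (line of $\K$, hyperplane) incidences is not workable as stated: that count reads $L(q^2+q+1)=n_1+2n_2+3n_3+n_4$, and you have no independent access to the numbers $n_i$ of hyperplanes of each type. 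Likewise your criterion for the directrix---the line through which every hyperplane is of type $T_1$, $T_2$ or $T_3$---only distinguishes it from the generators once you know each generator lies in some type-$T_4$ hyperplane, and that presupposes the existence of conics in $\K$, which is exactly the nontrivial content of Lemma~\ref{contains-arc}.

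The more serious gap is the projectivity step. Having the baseline $b$, the $q+1$ generators, and a conic $\C$ in a plane $\pi$ skew to $b$ is not yet enough: you must show that the bijection $b\to\C$ along the generators is a projectivity, and ``examining the $q+1$ hyperplanes through $\pi$'' only hands you back the generators without linking the parametrisations of $b$ and $\C$. The paper handles this in two stages. First (Lemma~\ref{lem:unique-conic}) it shows that through any point $P\in\K$ off $b$ there pass exactly $q$ conics of $\K$, partitioning $\K\setminus(m_0\cup b)$ where $m_0$ is the stick through $P$. Then (Lemma~\ref{lem:projectivity-odd}) it projects $\K$ from $P$ into a $3$-space $\Pi$ not through $P$: the images of the sticks $m_1,\ldots,m_q$ and the images of the $q$ conic-planes through $P$, together with $b$, assemble into the two reguli of a hyperbolic quadric in $\Pi$. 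Since a hyperbolic quadric is ruled by a projectivity from one regulus line to another, pulling back through the projection yields the required projectivity between $b$ and $\C$. This projection-to-a-quadric argument is the genuine combinatorial heart of the proof, and it is absent from your plan.
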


Using known results on the Bruck-Bose representation gives the following corollary. 

\begin{corollary}\Label{corBB} Let $\K$ be a set of $q^2+2q+1$ points in $\PG(4,q)$, $q\geq 3$,  such that every 3-space meets $
\K$ in either one, two or three lines, a line and a non-degenerate conic, or a twisted cubic. Then  there is a regular spread $\S$ in a 3-space $\si$ so that in the corresponding Bruck-Bose plane $\P(\S)\cong\PG(2,q^2)$, the set of points corresponding to $\K$ form a Baer subplane of $\PG(2,q^2)$ which is tangent to $\li$. 
\end{corollary}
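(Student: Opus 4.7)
The plan is to combine Theorem~\ref{char1} with the known Bruck-Bose dictionary relating tangent Baer subplanes to ruled cubic surfaces.

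By Theorem~\ref{char1}, the hypotheses on $\K$ force it to be a ruled cubic surface $\V^3_2$ in $\PG(4,q)$. In particular, $\K$ contains a distinguished line $\ell$ (the line directrix), a distinguished conic $\C$ in a plane disjoint from $\ell$, and $q+1$ generator lines joining points of $\ell$ to points of $\C$ according to a projectivity. With this structural information extracted from $\K$, the corollary will follow from the standard Bruck-Bose correspondence.

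The construction of the Bruck-Bose setup proceeds as follows. First, choose a 3-space $\si$ containing $\ell$, so that $\ell$ can play the role of the spread line representing the prospective Baer point at infinity. Next, construct a regular spread $\S$ of $\si$ with $\ell \in \S$ whose remaining $q^2$ lines are dictated by the geometry of $\V^3_2$ (via, for example, the residual cuts of the tangent planes of $\V^3_2$ along $\ell$ with $\si$). One then verifies that in the Bruck-Bose plane $\P(\S) \cong \PG(2,q^2)$ the preimage of $\K$ is a Baer subplane $\B$ meeting $\li$ only in the point represented by $\ell$: the $q+1$ generators of $\V^3_2$ correspond to the $q+1$ Baer sublines of $\B$ through that Baer point at infinity, while the conic $\C$ together with the other affine substructures of $\V^3_2$ accounts for the $q^2$ remaining Baer sublines and the $q^2+q$ affine points of $\B$.

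The main obstacle is the correct construction and verification of the regular spread $\S$; its compatibility with the geometry of $\V^3_2$ is exactly what ensures that the Bruck-Bose image is a Baer subplane rather than some other object of size $q^2+2q+1$. This converse direction of the Bruck-Bose correspondence between tangent Baer subplanes and ruled cubic surfaces is established in the Bruck-Bose literature (see e.g.\ \cite{quinn-conic}), and invoking it yields the corollary directly from Theorem~\ref{char1}.
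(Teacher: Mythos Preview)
Your approach is essentially the paper's own: apply Theorem~\ref{char1} to conclude that $\K$ is a ruled cubic surface, then invoke a known converse from the Bruck--Bose literature to obtain the regular spread and the tangent Baer subplane. Two imprecisions are worth correcting. First, the hyperplane $\si$ must be chosen so that $\si\cap\K$ is exactly the line directrix $b$ (a type~$T_1$ hyperplane), not an arbitrary $3$-space through $b$; otherwise extra generators of $\K$ would lie at infinity and the affine picture would be wrong. Second, the $q^2$ non-trivial spread lines are not cut out by ``tangent planes along $\ell$'' but by the planes of the $q^2$ conic directrices of $\K$: each such plane meets $\si$ in a line, and by \cite[Theorem~2.1]{ReyCath} these $q^2$ lines together with $b$ form a regular spread $\S$ for which $\K$ corresponds to a tangent Baer subplane of $\P(\S)$. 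The relevant reference is Casse--Quinn~\cite{ReyCath} (stated in the paper as Result~\ref{ReyCath}), rather than~\cite{quinn-conic}.
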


We  further prove a characterisation in $\PG(2,q^2)$ of a tangent Baer subplane by the intersection types with $\li$-Baer pencils. 
A \emph{Baer pencil}  in $\PG(2,q^2)$ is the cone of $q+1$ lines joining a vertex point $P$ to a  Baer subline base $b$, so contains $q^2(q+1)+1$ points.  Every  line  of $\PG(2,q^2)$ not through the vertex of a Baer pencil meets the Baer pencil in a Baer subline. 
An $\li$-\emph{Baer pencil}  is a Baer pencil containing $\li$, so has vertex in $\li$, and base  $b$ meeting $\li$ in a  point.
We will prove the following characterisation.

\begin{theorem}\Label{inter1} 
Let $\B$ be a set of $q^2+q+1$ points in $\PG(2,q^2)$, $q\geq5$, with $T=\B\cap\li$ a point. Then $\B$ is a Baer subplane if and only if  every $\li$-Baer pencil meets $\B$ in either a point; one or  two Baer sublines; or a non-degenerate $\Fq$-conic. 
 \end{theorem}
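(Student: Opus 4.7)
The plan is to prove the two directions separately. For the forward direction, I would assume $\B$ is a Baer subplane tangent to $\li$ at $T$, let $\C$ be an $\li$-Baer pencil with vertex $V$ and base $b$, and compute $\B\cap\C$ using the standard facts that any line of $\PG(2,q^2)$ meets a Baer subplane in $0$, $1$, or $q+1$ points, and that through each point of $\li$ there is a controlled number of extension lines of Baer sublines of $\B$ (in particular, exactly $q+1$ extensions through $T$, and on average one through each other point of $\li$). A case analysis on whether $V=T$ and on how the $q$ affine lines of $\C$ meet $\B$ then delivers the listed intersection types.

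For the converse, the plan is to reduce to Theorem~\ref{char1} via the Bruck-Bose representation. Let $\K\subset\PG(4,q)$ be the Bruck-Bose image of $\B$; then $|\K|=q^2+2q+1$ and $\K$ consists of the spread line $t$ (representing $T$) together with $q^2+q$ affine points. A double count (or a standard property of regular spreads) shows that every plane of $\si$ contains exactly one spread line. Hence every 3-space $\Xi$ of $\PG(4,q)$ meets $\si$ in a plane containing a unique spread line $v$, so $\Xi$ is the Bruck-Bose picture of some $\li$-Baer pencil $\C$ with vertex $V$ corresponding to $v$. The goal is to show $\K$ satisfies the hypothesis of Theorem~\ref{char1}; Corollary~\ref{corBB} then gives that $\B$ is a Baer subplane tangent to $\li$.

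The core step is a case-by-case translation of each allowed intersection type in $\PG(2,q^2)$ into the intersection $\K\cap\Xi$ in $\PG(4,q)$, using the known Bruck-Bose pictures of Baer sublines (as lines of $\PG(4,q)$ meeting the relevant spread line in one point) and of non-degenerate $\Fq$-conics (as twisted cubics in $\PG(4,q)$ interacting suitably with the spread). Splitting further into the subcases $v=t$ (in which $t\subset\Xi$) and $v\neq t$ (in which $t\cap\Xi$ is a single point), I would verify: a point-intersection yields a single line or a single point of $\K\cap\Xi$; a Baer subline yields two concurrent lines or a single line; two Baer sublines yield three lines or a pair; and an $\Fq$-conic yields a twisted cubic or a line together with a non-degenerate conic. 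All but the degenerate single-point case match the configurations of Theorem~\ref{char1} exactly.

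The main obstacle I anticipate is the single-point case with $v\neq t$, which naively produces $\K\cap\Xi$ equal to one point and so falls outside the list in Theorem~\ref{char1}. I expect this is resolved by combining the hypothesis across pencils sharing a common vertex $V\neq T$: fixing $V$ and varying the base $b$ over all $\li$-Baer pencils through $V$, the allowed intersection types force enough affine points of $\B$ to lie on the lines through $V$ that a pencil with $V\neq T$ and $\B\cap\C=\{T\}$ cannot occur (a counting argument similar to the forward direction, but now derived from the hypothesis instead of assumed for a Baer subplane). A secondary technical point is carefully establishing the Bruck-Bose correspondence between non-degenerate $\Fq$-conics through $T$ and twisted cubics of $\PG(4,q)$ meeting $t$; this is expected to be dispatched by an earlier lemma cited in the paper.
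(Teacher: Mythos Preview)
Your overall strategy for the converse---pass to Bruck--Bose, translate the pencil intersections into $3$-space intersections with $\K=[\B]$, and invoke Theorem~\ref{char1}---is the same as the paper's. However, there are two genuine gaps.

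\textbf{The case analysis misses two further ``bad'' types.} A Baer subline of $\B$ need not contain $T$; if it is disjoint from $\li$ it corresponds in $\PG(4,q)$ to a non-degenerate conic, not a line. Hence ``two Baer sublines'' with $v=t$ can give the spread line $[T]$, a generator line, \emph{and} a conic---a configuration (call it $T_7$) not on the list in Theorem~\ref{char1}. Similarly, a non-degenerate $\Fq$-conic through $T$ corresponds to a twisted cubic meeting $[T]$, so when $v=t$ you get a \emph{line plus a twisted cubic} (call it $T_8$), not ``a line together with a non-degenerate conic'' as you wrote. Together with your $T_6$ (single point), there are three exceptional types to eliminate, not one. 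The paper handles all three uniformly by a separate counting lemma in $\PG(4,q)$: for each bad type one chooses a suitable plane $\pi$ inside the offending $3$-space and counts $|\K|$ over the pencil of $3$-spaces through $\pi$, reaching a contradiction for $q\ge5$. Your proposed fix (vary the base $b$ with $V$ fixed) may dispose of $T_6$, but you have not addressed $T_7$ or $T_8$.

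\textbf{Corollary~\ref{corBB} does not finish the argument.} Even once you know $\K$ is a ruled cubic surface, Corollary~\ref{corBB} only asserts the existence of \emph{some} regular spread $\S'$ (in \emph{some} hyperplane) for which $\K$ represents a tangent Baer subplane of $\P(\S')$. It does not say that $\K$ represents a Baer subplane with respect to the \emph{given} spread $\S$ in the \emph{given} $\si$ that you used to model $\PG(2,q^2)$ in the first place. The paper closes this gap via Result~\ref{ReyCath}: one must show that in the extension to $\PG(4,q^2)$ the ruled cubic surface $[\B]$ contains the transversal lines $g,g^q$ of $\S$. This is done by taking, for each spread line $[L]\neq[T]$, a $3$-space through $[L]$ and using the hypothesis to identify $[\B]\cap[\D]$ as either a conic in a plane about $[L]$ (whose extension passes through $g\cap[L]$ and $g^q\cap[L]$) or a twisted cubic with the same property. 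Without this step your conclusion that $\B$ is a Baer subplane of the original $\PG(2,q^2)$ is unjustified.

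For the forward direction, your plan to argue directly in $\PG(2,q^2)$ by line--subplane intersection counts is a different route from the paper, which instead passes to Bruck--Bose (where $[\B]$ is a ruled cubic surface and the pencil is a $3$-space) and quotes the known hyperplane sections of $\V_2^3$. Either approach is viable; the paper's is shorter because the hyperplane sections are already classified in the literature.
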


Section~\ref{sec2} contains background material. Section~\ref{sec3} contains a proof of Theorem~\ref{char1} and Corollary~\ref{corBB}. Section~\ref{sec4} contains a proof of Theorem~\ref{inter1}.

\section{Background}\Label{sec2}

\subsection{A ruled cubic surface of $\PG(4,q)$}\Label{2.1}

We consider a scroll in $\PG(4,q)$ that rules a line and a non-degenerate conic according to a projectivity. That is, in $\PG(4,q)$, let $\ell$ be a line  and let $\C$ be a non-degenerate conic in a plane $\pi$ with $\ell\cap\pi=\emptyset$. Let $\theta,\phi$ be the non-homogeneous coordinates of $\ell$, $\C$ respectively. That is, without loss of generality, we can write $\ell=\{(1,\theta,\ 0,0,0)\,|\,\theta\in\Fq\cup\{\infty\}\}$ and $\C=\{(0,0,\ 1,\phi,\phi^2)\,|\,\phi\in\Fq\cup\{\infty\}\}$. Let $\sigma$ be a projectivity in $\PGL(2,q)$ that maps $(1,\theta)$ to $(1,\phi)$. 
Consider the set of $q+1$ lines of $\PG(4,q)$ joining a point of $\ell$ to the corresponding point of $\C$ according to $\sigma$. Then the points on these lines form a \emph{scroll ruling a line and a conic according to a projectivity}. 
 The line $\ell$ is called the \emph{line directrix}. The ruling lines are called \emph{generators} and are pairwise disjoint. Further, the line directrix and the generators are the only lines on the scroll. 
The scroll contains $q^2$ conics, called conic directrices, which pairwise meet in a point. Each conic directrix contains one point of each generator and  is disjoint from $\ell$. 
This scroll is a  variety $\V^3_2$ of dimension 2 and order 3, so  is a {\em ruled cubic surface}. For more details, see \cite{BV}, in particular, all ruled cubic surfaces are projectively equivalent.

\subsection{The Bruck-Bose  representation of $\PG(2,q^2)$ in $\PG(4,q)$}

The Bruck-Bose  representation of $\PG(2,q^2)$ in $\PG(4,q)$ was introduced independently by Andr\'e \cite{andr54} and Bruck and Bose \cite{bruc64,bruc66}. 
Let $\si$ be a hyperplane of $\PG(4,q)$ and let $\S$ be a regular line spread
of $\si$. Consider the  incidence
structure $\P(\S)$ with 
 {\em points}  the points of $\PG(4,q)\setminus\si$ and the lines of $\S$;  {\em lines}  the planes of $\PG(4,q)\setminus\si$ that contain
  an element of $\S$, and a line at infinity $\li$ whose points correspond to the lines of $\S$; and {\em incidence} induced by incidence in
  $\PG(4,q)$. Then $\P(\S)\cong\PG(2,q^2)$.  
  
Associated with a regular spread $\S$ in $\PG(3,q)$ are a unique pair of {\em transversal lines} in the quadratic extension $\PG(3,q^2)$. These transversal lines are disjoint from $\PG(3,q)$, and are conjugate with respect to the map $X=(x_0,\ldots,x_3)\mapsto X^q=(x_0^q,\ldots,x_3^q)$. We denote these transversal lines by $g,g^q$. The spread $\S$ is the set of $q^2+1$ lines $XX^q\cap\PG(3,q)$ for $X\in g$, in particular, the points of $\li$ are in 1-1 correspondence with the points of $g$.  If  $ \X$ is a set of points in $\PG(2,q^2)$, then we denote the corresponding set of points  in the Bruck-Bose representation in $\PG(4,q)$  by $[\X]$.  
For more details on this representation, see \cite{UnitalBook}.  In particular, the representation of Baer sublines and Baer subplanes is well known. 
In particular, a Baer subplane of $\PG(2,q^2)$ tangent to $\li$ corresponds to a ruled cubic surface of $\PG(4,q)$. 
The following result characterises the ruled cubic surfaces of $\PG(4,q)$ which correspond to Baer subplanes of $\PG(2,q^2)$.

\begin{result}\Label{ReyCath}\cite{ReyCath} Let $\K$ be a ruled cubic surface in $\PG(4,q)$ with line directrix $b$. Let $\si$ be a 3-space that meets $\K$ in the line $b$ and let $\S$ be a regular spread in $\si$ containing $b$. Then 
$\K$ corresponds via the Bruck-Bose representation  to  a Baer subplane tangent to $\li$ in the Bruck-Bose plane $\P(\S)\cong\PG(2,q^2)$ if and only if in the extension to $\PG(4,q^2)$, $\K$ contains the transversal lines    of the regular spread $\S$. 
\end{result}

%
%
%

\section{Proof of Theorem~\ref{char1}}\Label{sec3}

We begin by labelling the five different types of 3-spaces given in Theorem~\ref{char1}. 

\begin{defn}\Label{T1}
 Let $\K$ be a set of $q^2+2q+1$ points in $\PG(4,q)$. Let $\Pi$ be a 3-space, we say $\Pi$ has type
\begin{itemize}
\item[$T_1$:]  if $\Pi\cap\K$ is 1 line,
\item[$T_2$:]  if $\Pi\cap\K$ is 2  lines,
\item[$T_3$:]   if $\Pi\cap\K$ is 3 lines,
\item[$T_4$:]  if $\Pi\cap\K$ is a line and a non-degenerate conic,
\item[$T_5$:] if $\Pi\cap\K$ is a twisted cubic.

\end{itemize}
\end{defn}

Throughout this section, we let $\K$ be a set of $q^2+2q+1$ points of $\PG(4,q)$, $q\geq 3$, such that every 3-space has  type $T_1$, $T_2$, $T_3$, $T_4$ or $T_5$, that is, $\K$ satisfies the assumptions of Theorem~\ref{char1}.
 
We prove Theorem~\ref{char1} with a series of lemmas, the structure of these lemmas is as follows. In Lemma~\ref{lem:sticks} we show that $\K$ can be partitioned into  $q+1$ mutually skew lines which we call sticks. In Lemma~\ref{lem:base} we show that $\K$ contains a base line which meets each stick. In Lemma~\ref{lem:unique-conic}, we show that $\K$ contains $q^2$ non-degenerate conics.  In Lemma~\ref{lem:projectivity-odd},  we show that the sticks rule the base line and a non-degenerate conic of $\K$ according to a projectivity.  That is, $\K$ is a ruled cubic surface as defined in Section~\ref{2.1}, with line directrix the base line, and generators the sticks. We then use \cite[Theorem 2.1]{ReyCath} to conclude that $\K$ corresponds to a tangent Baer subplane of $\PG(2,q^2)$.

\begin{lemma}\Label{lem:line-conic-not-plane} Let $\K$ be a set of $q^2+2q+1$ points in $\PG(4,q)$  such that every 3-space has type $T_i$, $i\in\{1,\ldots,5\}$. 
Let $\Pi$ be a 3-space of  Type $T_4$, so $\Pi\cap\K$ is a line $\ell$ and a non-degenerate conic $\C$. Then $\ell\cap\C$ is a point and $\langle\ell,\C\rangle=\Pi$.  
\end{lemma}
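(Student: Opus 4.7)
Let $\pi=\langle\C\rangle$ be the plane spanned by $\C$; since $\C\subset\Pi$, also $\pi\subset\Pi$. Because $\ell$ and $\pi$ both lie in the 3-space $\Pi$, either $\ell\subset\pi$ (Case~A) or $\ell\cap\pi=\{P\}$ is a single point (Case~B). The lemma is equivalent to being in Case~B with $P\in\C$: then $\ell\cap\C\subseteq\{P\}$ and $P\in\C$ give $\ell\cap\C=\{P\}$, while $\ell\not\subset\pi$ implies $\langle\ell,\C\rangle=\langle\ell,\pi\rangle$ is a 3-space contained in $\Pi$, hence equal to $\Pi$. The plan is to examine the $q+1$ hyperplanes $\Pi'$ of $\PG(4,q)$ through $\pi$, combined with the double-count identity
\[
\sum_{\Pi'\supset\pi}|\Pi'\cap\K| \;=\; |\K| + q\,|\pi\cap\K|,
\]
which holds because each point of $\pi$ lies in all $q+1$ such hyperplanes while each point outside $\pi$ lies in exactly one.

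To exclude Case~A, I would assume $\ell\subset\pi$, so that every hyperplane $\Pi'\supset\pi$ satisfies $\Pi'\cap\K\supseteq\ell\cup\C$. A type-by-type analysis then forces $\Pi'\cap\K\subseteq\pi$: types $T_1,T_5$ cannot contain a non-degenerate conic; in types $T_2,T_3,T_4$, since $|\ell|=q+1\geq 4$ and distinct lines meet in at most a point, $\ell$ must itself be a component line, whereupon Bezout bounds (a line meets $\C$ in at most $2$ points; two distinct conics in a plane meet in at most $4$ points) force the remaining components to be secants of $\C$ inside $\pi$ or a conic coplanar with $\C$. Since every such $\Pi'$ meets $\K$ inside $\pi$, and these hyperplanes cover $\PG(4,q)$, we conclude $\K\subseteq\pi$, contradicting $|\K|=q^2+2q+1>q^2+q+1=|\pi|$.

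For Case~B, suppose for contradiction that $P\notin\C$, so $|\pi\cap\K|=|\{P\}\cup\C|=q+2$. For each of the $q$ hyperplanes $\Pi'\neq\Pi$ through $\pi$, $\C\subseteq\Pi'\cap\K$ while $\ell\cap\Pi'=\{P\}$. A parallel type analysis yields $\Pi'\cap\K=\ell_{\Pi'}\cup\C$ for some line $\ell_{\Pi'}$ through $P$ with $\ell_{\Pi'}\not\subset\pi$, contributing $|\ell_{\Pi'}\setminus\pi|=q$ points to $\K\setminus\pi$. Together with the $|\ell\setminus\{P\}|=q$ points from $\Pi\setminus\pi$, this gives $|\K\setminus\pi|=q(q+1)=q^2+q$, and hence $|\K|=(q+2)+(q^2+q)=q^2+2q+2$, contradicting $|\K|=q^2+2q+1$. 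Therefore $P\in\C$, and the two conclusions of the lemma follow.

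The main technical obstacle is the type-classification step for small $q$. When $q\in\{3,4,5\}$, the naive Bezout bounds do not immediately rule out types $T_2,T_3$ containing the conic $\C$, nor type-$T_4$ configurations where the conic component differs from $\C$; these exceptional subcases require more careful verification that any line or conic component sharing enough points with $\C$ must still lie in $\pi$, so that $|\Pi'\cap\K\setminus\pi|$ takes only the values $0$ or $q$ on each hyperplane through $\pi$ and the counting identities continue to force the claimed contradictions.
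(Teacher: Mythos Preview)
Your argument follows the same skeleton as the paper's: the identical two-case split (either $\ell\subset\pi$ or $\ell\cap\pi=\{P\}$ with $P\notin\C$), and the same device of running through the $q+1$ hyperplanes containing $\pi$ to count $\K$. The paper, however, is far terser at the classification step. Rather than your Bezout-style elimination, it simply observes that every $3$-space $\Pi'\supset\pi$ contains the non-degenerate conic $\C$, and among $T_1,\ldots,T_5$ only the description of $T_4$ involves a non-degenerate conic; it therefore declares each such $\Pi'$ to be of type $T_4$ outright. In Case~A this yields $\Pi'\cap\K\subset\pi$ for every $\Pi'$, whence $|\K|\le 2(q+1)$; in Case~B each $\Pi'$ contributes a line of $\K$ through $P$, giving $|\K|=|\C|+1+(q+1)q=q^2+2q+2$. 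Your bound $|\K|\le q^2+q+1$ in Case~A and your count $q^2+2q+2$ in Case~B reach the same contradictions by the same mechanism.

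The small-$q$ worry in your final paragraph is largely avoidable once you exploit a fact you almost state: since $\pi\subset\Pi$, the intersection $\pi\cap\K$ is known \emph{exactly}, namely $\ell\cup\C$ in Case~A and $\{P\}\cup\C$ in Case~B. Consequently any line of $\K$ lying in $\pi$ must be contained in this set, and for $q\ge 3$ such a line would have at most three points unless it equals $\ell$; so apart from $\ell$ itself no component line of a putative $T_1,T_2,T_3$ intersection can lie in $\pi$. Lines not in $\pi$ meet $\C$ in at most one point, so at most three such lines (plus possibly $\ell$) cannot cover $\C$ once $q\ge 4$. This reduces the genuinely delicate residue to $q=3$ in Case~A, which the paper glosses over just as you do; your caution there is reasonable, but the Bezout machinery for distinct conics is never needed.
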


\begin{proof}
Let $\Pi$ be a 3-space meeting $\K$ in a line $\ell$ and a non-degenerate conic $\C$, and let $\pi$ be the plane containing $\C$. Suppose $\ell$ is contained in $\pi$. Each 3-space containing $\pi$ has type $T_i$ for some $i\in\{1,\ldots,5\}$, so has type $T_4$ and hence contains no further point of $\K$. Every point of $\PG(4,q)$ lies in a 3-space containing $\pi$, so if $\ell\subset\pi$, then 
 $|\K|=|\K\cap\pi|\le 2(q+1)$, a contradiction. Hence $\ell$ meets $\pi$ in a point. Suppose the point $P=\ell\cap\pi$ is not in $\C$. 
As the $q+1$ $3$-spaces through $\pi$, contain a 
 non-degenerate conic of $\K$,  they are of  Type $T_4$.
Hence each 3-space about $\pi$ also contains a line of $\K$, which necessarily contains the point $P$. Therefore the number of points of $\K$ is
$
|\C|+1+(q+1)\times q=q^2+2q+2$, a contradiction. Hence the line $\ell$ meets the non-degenerate conic  $\C$ in a point. 
\end{proof}

\begin{lemma}\Label{contains-arc}
Let $\K$ be a set of $q^2+2q+1$ points in $\PG(4,q)$, $q\geq 3$, such that every 3-space has type $T_i$, $i\in\{1,\ldots,5\}$. 
Then $\K$ contains at least one non-degenerate conic.
\end{lemma}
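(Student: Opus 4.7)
Plan. I would argue by contradiction: suppose $\K$ contains no non-degenerate conic, so by Lemma~\ref{lem:line-conic-not-plane} no 3-space is of type $T_4$, and every 3-space has type in $\{T_1,T_2,T_3,T_5\}$. An averaging argument on point--3-space incidences shows the total $(q+1)^2(q^3+q^2+q+1)$ strictly exceeds $(q+1)(q^4+q^3+q^2+q+1)$, so some 3-space has more than $q+1$ points of $\K$ and is therefore of type $T_2$ or $T_3$; in particular $\K$ contains at least one line.

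The plan's key structural step is to rule out two $\K$-lines meeting at a point. I first rule out three coplanar $\K$-lines: if a plane $\pi'$ contained three of them, every 3-space $\Pi\supset\pi'$ would have $|\K\cap\Pi|\ge 3q$, forcing type $T_3$ with exactly those three lines and hence $\K\cap\Pi=\pi'\cap\K$, so that the pencil identity $\sum_{\Pi\supset\pi'}|\K\cap\Pi|=(q+1)|\pi'\cap\K|+|\K\setminus\pi'|$ forces $|\K|=|\pi'\cap\K|<(q+1)^2$, absurd. Given $\ell_1,\ell_2\subset\K$ meeting at $P$, set $\pi=\langle\ell_1,\ell_2\rangle$ and $s=|\pi\cap\K|-(2q+1)\ge 0$. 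The arithmetic $\sum_{\Pi\supset\pi}|\K\cap\Pi|=3q^2+3q+1+qs$ together with the type-size constraints excludes $s\ge 1$ (any third $\K$-line of a $T_3$ through $\pi$ must contain every isolated extra point of $\pi\cap\K$, forcing it into $\pi$ and producing three coplanar $\K$-lines); in the case $s=0$ it forces enough of the $q+1$ 3-spaces through $\pi$ to be $T_3$ with a third $\K$-line through $P$ that $r(P)\ge q+2$. Combined with the universal bound $r(P)\le q+2$ (each line through $P$ accounts for $q$ distinct further $\K$-points), we must have equality and $\K$ is the union of $q+2$ lines concurrent at $P$. Either all $\K$-lines pass through $P$, in which case any 3-space avoiding $P$ meets $\K$ in exactly $q+2$ points, a value realized by no allowed type; or there is a $\K$-line $\ell'\not\ni P$, and then joining $P$ to $\ell'$ forces every line through $P$ in $\langle P,\ell'\rangle$ to be a $\K$-line, giving three coplanar $\K$-lines, a contradiction.

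Having excluded meeting pairs, all $\K$-lines are pairwise skew, so $r(P)\le 1$ for every $P\in\K$ and $L(q+1)=\sum_{P}r(P)\le(q+1)^2$, whence $L\le q+1$. In this skew regime every $T_2$ 3-space has exactly $2q+2$ points and every $T_3$ has $3q+3$, so the incidence identity $\sum_i n_i t_i=(q+1)^2(q^3+q^2+q+1)$ simplifies to $t_2+2t_3=q(q^2+q+1)$; counting line--3-space incidences yields $L(q^2+q+1)+t_5=q^4+2q^3+2q^2+2q+1$. Integrality, $t_5\ge 0$, and $L\le q+1$ pin down $L=q+1$ and $t_5=q^3(q+1)$, so $t_3\ge(q-1)(q^2+q+1)=q^3-1$ while $t_3\le t_1+t_2+t_3=q^2+q+1$; this is contradictory for $q\ge 3$.

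The principal obstacle is the delicate case analysis in the meeting-pair step: one must carefully verify that in the $s=0$ subcase the third $\K$-line of every relevant $T_3$ through $\pi$ is forced through $P$, ruling out the configurations where it enters $\pi$ at a point of $(\ell_1\cup\ell_2)\setminus\{P\}$, using the allowed sizes $n_3\in\{3q+1,3q+2,3q+3\}$ together with the ban on three coplanar $\K$-lines, and handling both possible values $A\in\{1,2\}$ for the number of $T_2$ 3-spaces in the pencil through $\pi$.
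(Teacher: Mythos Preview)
Your proposal has a genuine gap at exactly the point you flag as ``the principal obstacle'': in the $s=0$ meeting-pair subcase, the third $\K$-line $m$ of a $T_3$ 3-space through $\pi=\langle\ell_1,\ell_2\rangle$ is \emph{not} forced through $P=\ell_1\cap\ell_2$. Indeed, since $s=0$ and $m\not\subset\pi$, the point $m\cap\pi$ lies in $\ell_1\cup\ell_2$; but whether $m\cap\pi=P$ or $m\cap\pi\in(\ell_1\cup\ell_2)\setminus\{P\}$, in either case $m$ meets $\ell_1\cup\ell_2$ in exactly one point, so $|\Pi\cap\K|=(2q+1)+q=3q+1$. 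Thus the size data cannot distinguish the two configurations, and the ban on three coplanar $\K$-lines does not help either (two third-lines $m_i,m_j$ meeting $\ell_1$ at the same point $Q\neq P$ span a plane meeting $\pi$ only in $Q$, so they are not coplanar with $\ell_1$). Your arithmetic correctly gives $A=1$, $B=q$, but it does not pin down where the $m_i$ enter $\pi$, so the conclusion $r(P)\ge q+2$ is unjustified. In fact the paper's argument for this case establishes the \emph{opposite}: if all $m_i$ passed through $P$, a 3-space avoiding $P$ would meet $\K$ in exactly $q+2$ points, an impossible type; hence at least one $m_i$ meets $(\ell_1\cup\ell_2)\setminus\{P\}$, and the contradiction is then obtained by a second, more delicate pencil of 3-spaces through that $m_i$.

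There is a second, smaller gap in your skew case: the asserted bound $t_3\ge(q-1)(q^2+q+1)$ does not follow from the two identities you derive. A clean way to finish is to count unordered pairs of (skew) $\K$-lines: each such pair spans a unique 3-space of type $T_2$ or $T_3$, giving $t_2+3t_3=\binom{L}{2}\le\binom{q+1}{2}$, which combined with $t_2+2t_3=q(q^2+q+1)$ forces $t_3<0$. Alternatively, the paper disposes of this case in two lines: with $\ell,m$ skew $\K$-lines and $X\in\ell$, any 3-space through $\langle X,m\rangle$ not containing $\ell$ is $T_2$ or $T_3$, hence contains a $\K$-line through $X$ other than $\ell$, contradicting pairwise skewness.
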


\begin{proof}
Suppose that $\K$ does not contain a non-degenerate conic, that is, there are no 3-spaces of type $T_4$. We work to a contradiction.
There are four cases for the lines of $\K$: (a) $\K$ contains no lines; (b) $\K$ contains exactly one line; (c) $\K$ contains two lines which meet; (d) $\K$ contains $s\geq2$ lines which are pairwise skew. 

Case (a): suppose  $\K$ does not contain any lines, then every 3-space has type $T_5$.  Let $\Pi$ be a 3-space, so there is a plane $\pi$ contained in $\Pi$ that contains 3 points of $\K$. 
Each 
 $3$-space through $\pi$ has type $T_5$, so $|\K|=3+(q+1) (q-2)=q^2-q+1$, a contradiction. Hence case (a) does not occur.

Case (b): suppose  that $\K$ contains exactly one line $\ell$.  Let $P\in\K\setminus\ell$ and consider the plane $\alpha=\langle P,\ell\rangle$.  The 3-spaces through $\alpha$ meet $\K$ in at least $P,\ell$, and so are of type $T_2$ or $T_3$, contradicting our assumption that $\K$ contains exactly one line. 
Hence case (b) does not occur.

Case (c): suppose $\K$ contains two lines $\ell,\ell'$ which meet in a point. So $\pi=\langle \ell,\ell' \rangle$ is a plane. The 3-spaces containing $\pi$ contain at least 2 lines of $\K$, so are of type $T_2$ or $T_3$. 
Suppose  there is a 3-space $\Pi$ of type $T_3$ that contains $\pi$.
If the points of $\Pi\cap\K$ all lie in $\pi$, then
every  3-space containing $\pi$ has type $T_3$ and hence contains no further point of $\K$. So
 $|\K|=|\pi\cap\K|\le 3q+1$, a contradiction as $q\geq 3$.
Hence 
 $\Pi$ contains a line $m$ which is not contained in $\pi$. Consider the point $m\cap\pi$. If $m\cap\pi$  is not in $\ell$ or $\ell'$, then every 3-space that contains $\pi$ contains $\ell,\ell'$ and a further point of $\K$, so has type $T_3$. In this case, the 
 number of points of $\K$ is
$
|\K|= (2q+1)+1+(q+1).q=q^2+3q+2$, a contradiction.  Hence the point $m\cap\pi$ lies in $\ell\cup\ell'$. 
Let $x$ be the number of 3-spaces  of type $T_2$ containing $\pi$ and let $y$ be the number of 3-spaces  of type $T_3$ containing $\pi$. As every 3-space containing $\pi$ has type $T_2$ or $T_3$, $x+y=q+1$.  Further, counting the points of $\K$ in planes about $\pi$ we have $2q+1+y.q=q^2+2q+1$. Hence $y=q$ and $x=1$.

 Hence the points of $\K$ consist of the points on $q+2$ lines $\{\ell,\ell',m_1,\ldots,m_q\}$ where each $m_i$ meets $\pi$ in a point of $\ell\cup\ell'$, and no two $m_i$ lies in a common 3-space about $\pi$. Suppose the lines $m_1,\ldots,m_q$ are concurrent at the point $\ell\cap \ell'$.  Let $\Pi$ be a 3-space not through $\ell\cap \ell'$, so 
$\Pi$ meets each of the $q+2$ lines $\{m_1,\ldots,m_q,\ell,\ell'\}$ in distinct points, and $|\K\cap\Pi|=q+2$, contradicting $\Pi$ having type $T_i$ for some $i\in\{1,\ldots,5\}$.  Thus at least one of the   lines $m_1,\ldots,m_q$ meets $\pi$ in a point of $\ell\cup\ell'$ which is not the point $\ell\cap\ell'$.

Without loss of generality, suppose $m_1$ meets $\pi$ in a point of $\ell$ distinct from $\ell\cap \ell'$. Let $X\in\K$ with $X\notin\pi$ and $X\notin m_1$. Label the points on $\ell'$ by $Y_0=\ell'\cap\ell,Y_1,\ldots,Y_q$. So for $i=1,\ldots,q$, the line $Y_iX$ is disjoint from $m_1$.  
Consider the 3-spaces $\Pi_i=\langle Y_i,X,m_1\rangle$, $i=0,\ldots,q$. 
As there are no type $T_4$ 3-spaces; and $\Pi_i\cap\K$ contains the line $m_1$ and the two points $X$, $Y_i$; each $\Pi_i$ has type $T_2$ or $T_3$. 
Now $\Pi_0\cap\K$ contains $\ell,m_1, X$, so $\Pi_0$ has type $T_3$. Hence $\Pi_0$  contains at least $q-1$  points of $\K$ not in $\ell\cup\ell'\cup m_1\cup X$.  
If some $\Pi_i$  has type $T_2$, then the two lines in $\Pi_i\cap\K$ are $XY_i$ and $m_1$. As these two lines are disjoint, $\Pi_i\cap\K$ contains $q-1$ points  not in $\ell\cup\ell'\cup m_1\cup X$. 
If some $\Pi_i$, $i\neq0$,  has type $T_3$, then the three lines in $\Pi_i\cap\K$ are either $m_1,XY_i$ and a further line; or $m_1$ and distinct lines $m' $ through $X$ and  $m''$ through $Y_i$. In the latter case, if $m'$, $m''$ meet, then they meet in a point of $m_1$.   As $XY_i$ and $m_1$ are disjoint, $\Pi_i\cap\K$ contains at least $2q-2$ points not in $\ell\cup\ell'\cup m_1\cup X$. 
Let $a$ be the number of type $T_2$ 3-spaces in $\{\Pi_1,\ldots,\Pi_q\}$ and let $b$ be the number of type $T_3$ 3-spaces in $\{\Pi_1,\ldots,\Pi_q\}$, so $a+b=q$.  
Counting the points of $\K$ gives $|\K|\geq (q-1)+a(q-1)+b(2q-2)+(2q+1+q+1)$. Simplifying using $a+b=q$ gives $0\geq b(q-1)+q+2$, contradicting $b\geq 0$.  
Hence case 
 (c) does not occur.

Case (d): suppose $\K$ contains at least two lines, and the lines contained in $\K$ are pairwise skew.  Let $\ell,m$ be two skew lines of $\K$, then the 3-space $\langle \ell,m\rangle$ has type $T_2$ or $T_3$. 
Let $X\in \ell$ and consider the plane $\pi=\langle X,m\rangle$.  
By assumption, the lines of $\K$ are pairwise skew, so there is a unique line of $\K$ through $X$, namely $\ell$. Let $\Sigma$ be a  3-space through $\pi$, with $\ell$ not in $\Sigma$. As $\Sigma\cap\K$ contains the line $m$ and another point $X$, $\Sigma$ has type $T_2$ or type $T_3$. Hence $\Sigma\cap\K$ contains a line through $X$, so contains the unique line $\ell$ of $\K$ through $X$, a contradiction.   
That is, case (d) does not occur. 

We have shown that if $\K$ does not contain a non-degenerate conic, then none of the cases (a), (b), (c), (d) can occur, a contradiction. Thus $\K$ contains at least one non-degenerate conic.  
\end{proof}

\begin{lemma}\Label{lem:sticks}
Let $\K$ be a set of $q^2+2q+1$ points in $\PG(4,q)$, $q\geq 3$,  such that every 3-space has type $T_i$, $i\in\{1,\ldots,5\}$. 
Then $\K$ can be partitioned into a set of $q+1$ lines called {\em sticks}.
Further, no three sticks lie in a 3-space.
\end{lemma}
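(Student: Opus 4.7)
My plan is to locate a non-degenerate conic $\C\subset\K$ (via Lemma~\ref{contains-arc}), let $\pi$ be its plane, and study the pencil of $q+1$ 3-spaces through $\pi$. Since none of the types $T_1,T_2,T_3,T_5$ has a non-degenerate conic in its intersection with $\K$, every 3-space through $\pi$ must be of type $T_4$; Lemma~\ref{lem:line-conic-not-plane} then says each such 3-space meets $\K$ in $\C$ together with a line not in $\pi$ crossing $\C$ at exactly one point. Call these 3-spaces $\Sigma_0,\ldots,\Sigma_q$ and their corresponding lines $\ell_0,\ldots,\ell_q$, with $P_i=\ell_i\cap\pi\in\C$. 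The same reasoning rules out any point of $\K$ in $\pi\setminus\C$, so $\K=\C\cup\bigcup_i\ell_i$. For $i\neq j$, the intersection $\ell_i\cap\ell_j$ must lie in $\Sigma_i\cap\Sigma_j=\pi$, so equals $\{P_i\}$ if $P_i=P_j$ and is empty otherwise.

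The heart of the argument is to show the $P_i$ are pairwise distinct. Assume for contradiction $P_0=P_1=P$ and set $\alpha=\langle\ell_0,\ell_1\rangle$. First, $\alpha\cap\pi=\{P\}$: if the intersection were a line, $\alpha$ would lie in $\Sigma_0=\langle\pi,\ell_0\rangle$, forcing $\ell_1\subset\Sigma_0\cap\K=\C\cup\ell_0$, which is impossible since $\ell_1$ has $q+1\geq 4$ points yet meets $\C\cup\ell_0$ only at $P$. Next, for each $Q\in\C\setminus\{P\}$ form the 3-space $\Pi_Q=\langle\alpha,Q\rangle$; it cannot contain $\pi$, else $\Pi_Q=\Sigma_k$ for some $k$, and the two distinct sticks $\ell_0,\ell_1\subset\alpha\subset\Sigma_k$ would both lie in $\Sigma_k\cap\K=\C\cup\ell_k$, forcing $\ell_0=\ell_k=\ell_1$. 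Hence $\Pi_Q\cap\pi=PQ$ and $\Pi_Q\cap\K\supseteq\ell_0\cup\ell_1\cup\{Q\}$ has at least $2q+2$ points, so checking the five possible intersection-sizes leaves only type $T_3$. Writing $\Pi_Q\cap\K=\ell_0\cup\ell_1\cup m_Q$ with $Q\in m_Q$, the 3-space $\langle\pi,m_Q\rangle$ is some $\Sigma_k$, and the line $m_Q\subset\C\cup\ell_k$ must equal $\ell_k$ (else $|m_Q\cap(\C\cup\ell_k)|\leq 3<q+1$), giving $P_k=Q$. Letting $Q$ range over the $q$ points of $\C\setminus\{P\}$ therefore produces $q$ distinct indices $k\in\{2,\ldots,q\}$, but this latter set has only $q-1$ elements --- contradiction.

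With the $P_i$ now pairwise distinct, $\{P_0,\ldots,P_q\}=\C$ and $\K=\ell_0\sqcup\cdots\sqcup\ell_q$ is the desired partition into $q+1$ pairwise skew sticks. For the final clause, if three sticks $\ell_i,\ell_j,\ell_k$ lay in a common 3-space $\Pi$, the three distinct contact points $P_i,P_j,P_k$ on $\C$ would lie in $\Pi\cap\pi$; since no three points of a non-degenerate conic are collinear, $\Pi\cap\pi$ would have to be two-dimensional, forcing $\pi\subset\Pi$ and hence $\Pi=\Sigma_m$ for some $m$. But $\Sigma_m$ contains only the one stick $\ell_m$ (again since any other stick $\ell_i\subset\Sigma_m$ would lie in $\C\cup\ell_m$), contradiction. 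The main obstacle is the distinctness step; the rest is bookkeeping.
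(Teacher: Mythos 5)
Your proof is correct and follows essentially the same route as the paper: the conic from Lemma~\ref{contains-arc}, the pencil of type-$T_4$ 3-spaces through its plane yielding the $q+1$ sticks, the partition by counting, and the collinearity-of-contact-points argument for the final clause; only the pairwise-skewness step is organised differently (you count the sticks forced through the points of $\C\setminus\{P\}$ and overrun the $q-1$ available indices, while the paper picks a point of $\C$ lying on no line of $\K$ and contradicts the resulting type-$T_3$ space), a minor variant of the same idea and in fact slightly more careful about why the relevant span is a genuine 3-space. One small slip: excluding type $T_2$ for $\Pi_Q$ is not purely a matter of intersection sizes, since two skew lines also give $2q+2$ points; you need the extra remark that the two lines would have to be $\ell_0$ and $\ell_1$ themselves (each being a full line inside the union), which meet at $P$ and leave no room for $Q$.
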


\begin{proof}
By Lemma~\ref{contains-arc}, $\K$ contains a non-degenerate conic $\C$. Denote the plane  containing $\C$ by  $\pi$.  Each 3-space which contains $\pi$ has type $T_4$ and so contains a further line of $\K$ meeting $\C$.  Moreover, by Lemma~\ref{lem:line-conic-not-plane}, this line meets $\C$ and is not contained in $\pi$.  Thus the  $q+1$ 3-spaces containing $\pi$  give rise to a set of $q+1$ distinct lines of $\K$, we call these lines \emph{sticks}.  Suppose  that two sticks $m,m'$  meet in a point, which is necessarily a point of $\C$. As the 3-spaces containing $\pi$ partition the points of $\PG(4,q)\setminus\pi$,  there is a point $X\in\C$ through which there is no line of $\K$.  Consider the 3-space $\Pi=\langle m,m',X\rangle$. As $\Pi$ contains two lines and a further point of $\K$, $\Pi$ has type $T_3$ and so contains a line of $\K$ through $X$, a contradiction.  Thus the $q+1$ sticks meets $\C$ in distinct points, and lie in distinct 3-spaces containing $\pi$, hence they are pairwise skew, and so partition the $(q+1)^2$ points of $\K$.

Finally suppose $\Pi$ is a 3-space containing three sticks $m_1,m_2,m_3$. Then $\Pi$ has type $T_3$, so does not contain a non-degenerate conic, so does not contain $\pi$. So $\Pi$ meets $\pi$ in a line $\ell$. Further, each stick meets $\pi$ in a point of $\C$, so $\ell$ contains the three collinear points $m_i\cap\C$, $i=1,2,3$, a contradiction.
\end{proof}

\begin{lemma}\Label{lem:base}
Let $\K$ be a set of $q^2+2q+1$ points in $\PG(4,q)$, $q\geq 3$,  such that every 3-space has type $T_i$, $i\in\{1,\ldots,5\}$. 
Then $\K$ contains exactly $q+2$ lines; consisting of the $q+1$ sticks, and a  \emph{baseline} which intersects every stick.
\end{lemma}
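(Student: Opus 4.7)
The plan is to identify a baseline as the third line of $\K$ sitting inside the 3-space spanned by any two sticks, to show that the same line arises for every such pair, and then to deduce uniqueness and the count.

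Fix two sticks $s_0, s_1$, which are skew by Lemma~\ref{lem:sticks}, and let $\Pi_{01}=\langle s_0,s_1\rangle$. Since $\Pi_{01}$ is a 3-space containing the two disjoint lines $s_0,s_1\subseteq\K$, it has type $T_2$ or $T_3$. To rule out $T_2$ I would appeal to the elementary fact that in $\PG(4,q)$ a line and a 3-space meet in at least one point: each remaining stick $s_k$, $k\neq 0,1$, meets $\Pi_{01}$ in a point of $\K\cap\Pi_{01}$. If $\Pi_{01}$ were of type $T_2$ this point would lie on $s_0\cup s_1$, contradicting that the sticks are pairwise skew. Hence $\Pi_{01}$ has type $T_3$, with $\K\cap\Pi_{01}=s_0\cup s_1\cup\ell$ for some third line $\ell$.

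I would next check that $\ell$ is a baseline. By Lemma~\ref{lem:sticks} no three sticks share a 3-space, so $\ell$ is not a stick and hence meets each stick in at most one point. As the $q+1$ points of $\ell$ lie in $\K$ and the sticks partition $\K$, $\ell$ must meet each of the $q+1$ sticks in exactly one point. In particular, for every pair of sticks $(s_i,s_j)$ the points $\ell\cap s_i$ and $\ell\cap s_j$ both lie in $\Pi_{ij}$, forcing $\ell\subset\Pi_{ij}$, so the same line $\ell$ is the third line of $\K$ inside every $\Pi_{ij}$.

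For the count, any line of $\K$ is either a stick or, by the same counting argument just given, a baseline meeting every stick. A second baseline $\ell'\neq\ell$ would place four distinct lines $s_0,s_1,\ell,\ell'$ in $\K\cap\Pi_{01}$, contradicting the type $T_3$ structure of $\Pi_{01}$. Hence $\K$ contains exactly $q+2$ lines: the $q+1$ sticks and the unique baseline $\ell$. The only subtle step is the dimension-count argument ruling out type $T_2$ for $\Pi_{01}$; everything else falls out of the partition of $\K$ into sticks together with the type classification.
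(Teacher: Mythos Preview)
Your argument is correct and is in fact cleaner than the paper's. The paper proves existence of a non-stick line by a counting argument: it fixes a point $X$ on one stick $m$, looks at the $q+1$ hyperplanes through the plane $\langle X,m'\rangle$ for a second stick $m'$, classifies them as type $T_2$, $T_3$ or $T_4$, and reaches a contradiction on $|\K|$ under the assumption that no non-stick line exists. You bypass this entirely by observing that every other stick, being a line in $\PG(4,q)$, must meet the hyperplane $\Pi_{01}=\langle s_0,s_1\rangle$ in a point of $\K$ off $s_0\cup s_1$, forcing $\Pi_{01}$ to be of type $T_3$ and handing you the baseline directly. Your route never needs type $T_4$ at all, and it exhibits the baseline explicitly rather than inferring its existence by contradiction.

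For uniqueness the two proofs are closer in spirit: the paper argues that two non-stick lines $\ell,\ell'$ would trap at least $q$ sticks in the subspace $\langle\ell,\ell'\rangle$, violating the type classification; you instead note that both $\ell$ and $\ell'$ would lie in $\Pi_{01}$, giving four lines of $\K$ in a type $T_3$ hyperplane. Your version is again the more direct one, since it reuses the specific $\Pi_{01}$ already analysed rather than building a new subspace. The one point worth making explicit in your write-up is that $q\ge 3$ guarantees at least one stick $s_k$ with $k\neq 0,1$, so the line--hyperplane incidence argument is not vacuous.
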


\begin{proof}
By Lemma~\ref{lem:sticks}, $\K$ contains at least $q+1$ lines, namely the sticks. 
Suppose $\K$ contains no further line. Let $m,m'$ be two sticks and let $X$ be any point on $m$.  Consider the plane $\pi=\langle X,m'\rangle$.  
Let $\Pi$ be a 3-space containing $\pi$, as $\Pi$  meets $\K$ in at least a line and a point, $\Pi$ must be type $T_2$, $T_3$ or $T_4$. 
If $\Pi$ is type $T_3$, then 
$\Pi$  contains three lines, which by assumption are all sticks, contradicting Lemma~\ref{lem:sticks}. As there is a unique line of $\K$ through $X$, there is a unique 3-space of type $T_2$ containing $\pi$. 
So the remaining $q$ 3-spaces containing $\pi$ have type $T_4$. Recalling Lemma~\ref{lem:line-conic-not-plane}, such a 3-space meets $\K$ in $2q+1$ points, $q+2$ of which lie in $\pi$. Hence counting the points of $\K$ yields $(q+2)+q+q(q-1)=q^2+q+2$, a contradiction. 
 Hence $\K$ contains at least one line which is not a stick.

Now suppose $\K$ contained two  lines, $\ell,\ell'$ which are not sticks. As the sticks partition $\K$, and $\ell,\ell'$ lie in $\K$ but are not sticks, they each meet every stick in a point. If $\ell,\ell'$ do not meet, then 
 the $q+1$ sticks all  lie in the subspace $\Sigma=\langle \ell,\ell'\rangle$.
  If $\ell,\ell'$   meet in a point, then the $q$ sticks not containing the point $\ell\cap\ell'$ lie  in the subspace $\Sigma=\langle \ell,\ell'\rangle$.
In both cases, $\Sigma$ is  either   a 3-space, or is contained in a 3-space, that does not have type $T_i$, $i=1,\ldots,5$, a contradiction. Hence there is at most one line of $\K$ which is not a stick. Thus $\K$ contains exactly one line which is not a stick, and this line meets every stick. 
\end{proof}

\begin{lemma}\Label{lem:unique-conic}
Let $\K$ be a set of $q^2+2q+1$ points in $\PG(4,q)$, $q\geq 3$,  such that every 3-space has type $T_i$, $i\in\{1,\ldots,5\}$. 
Let $P,Q$ be any two points on distinct sticks, not lying on the baseline of $\K$.  Then there exists a unique non-degenerate conic $\C$ contained in $\K$ through $P,Q$. Moreover, $\C$ meets each stick of $\K$ in a unique point, and does not meet the baseline. 
\end{lemma}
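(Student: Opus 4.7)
Write $B_i = b \cap m_i$ for the baseline point on each stick $m_i$, and let $P \in m_0 \setminus \{B_0\}$, $Q \in m_1 \setminus \{B_1\}$. The plan is to work with the plane $\pi = \langle m_0, Q\rangle$ and classify the $q+1$ three-spaces through it. A short argument gives $\pi \cap \K = m_0 \cup \{Q\}$, and the three-space $\Sigma_0 := \langle m_0, m_1\rangle$ (which contains $m_0, m_1$ and $b$) has type $T_3$. For any other three-space $\Pi$ through $\pi$, the only line of $\K$ through $Q$ is $m_1$, so any second line of $\K$ inside $\Pi$ would force $\Pi = \Sigma_0$; combined with $|\Pi \cap \K| > q+1$ (excluding $T_1$ and $T_5$), this forces $\Pi$ to have type $T_4$ with $\Pi \cap \K = m_0 \cup \C_i$. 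Hence the $q$ remaining three-spaces $\Pi_1, \ldots, \Pi_q$ give rise to $q$ non-degenerate conics $\C_i \subset \K$ through $Q$, each meeting $m_0$ in a single point $X_i$. These conics are pairwise distinct: $\C_i = \C_j$ would put $\C_i$ inside $\Pi_i \cap \Pi_j \cap \K = \pi \cap \K = m_0 \cup \{Q\}$, which is too small to contain a non-degenerate conic.

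I would next show that each $\C_i$ meets every stick in exactly one point. Writing $\pi_i$ for the plane of $\C_i$, if $m_j \subset \pi_i$, then $m_j \subseteq \Pi_i \cap \K = m_0 \cup \C_i$; for $j \neq 0$ the skewness of sticks forces $m_j \subseteq \C_i$, impossible; for $j = 0$ we get $\pi_i = \pi$ and so $\C_i \subseteq m_0 \cup \{Q\}$, also impossible. Hence $|\C_i \cap m_j| \leq 1$, and since the sticks partition the $q+1$ points of $\C_i$, equality holds for every $j$. To show $\C_i \cap b = \emptyset$, assume $B_0 \in \C_i$. For each $k \in \{2, \ldots, q\}$ the point $Q$ is not in $\langle m_0, m_k\rangle$ (otherwise $m_1 \subset \langle m_0, m_k\rangle$, contradicting Lemma~\ref{lem:sticks}), so $\pi_i \not\subset \langle m_0, m_k\rangle$ and $\pi_i \cap \langle m_0, m_k\rangle$ is a line. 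This line already contains the two distinct conic-points $B_0$ and $\C_i \cap m_k$, so the line-conic bound forces $\C_i \cap m_r \neq B_r$ for every $r \neq 0, 1$. But then $\pi_i \cap \langle m_1, m_k\rangle$ is again a line which meets $\C_i$ in three distinct points $Q$, $B_0$ and $\C_i \cap m_k$, a contradiction.

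The crux is the injectivity of the map $i \mapsto X_i$. Suppose $X_i = X_j = X$ for some $i \neq j$; the line-conic bound on $\C_i \cap \pi = \{Q, X_i\}$ gives $\C_i \cap \C_j = \{Q, X\}$. The planes $\pi_i, \pi_j$ must be distinct (otherwise $\pi_i = \pi_j = \pi$, putting $\C_i$ in $\pi \cap \K$), share the line $QX$, and so span a three-space $\Pi$ with $|\Pi \cap \K| \geq |\C_i \cup \C_j| = 2q$. If $\Pi$ were of type $T_2$ or $T_3$, then using $\C_i \cap b = \emptyset$ the $q+1$ points of $\C_i$ would lie on at most two sticks of $\K$, contradicting that $\C_i$ meets every stick. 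So $\Pi$ is of type $T_4$ with $\Pi \cap \K = m_k \cup \C'$. Since $|\C_i \cap m_k| \leq 1$, this gives $|\C_i \cap \C'| \geq q \geq 3$, and the fact that two non-degenerate conics in distinct planes intersect in at most two points forces $\pi_{\C'} = \pi_i$; symmetrically $\pi_{\C'} = \pi_j$, contradicting $\pi_i \neq \pi_j$. The edge subcases $\C' \in \{\C_i, \C_j\}$ are handled directly: for example $\C' = \C_i$ gives $\C_j \subseteq m_k \cup \C_i$, forcing $q - 1 = |\C_j \setminus \C_i| \leq |\C_j \cap m_k| \leq 1$, impossible. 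Combining injectivity with $X_i \neq B_0$, the map $i \mapsto X_i$ is a bijection onto $m_0 \setminus \{B_0\}$, so there is exactly one $\C_i$ passing through $P$, yielding the unique conic $\C$; the remaining claims follow immediately. The main obstacle is this last conic-intersection analysis, where the Bezout-style bounds must be carefully matched against the combinatorics of $q \geq 3$.
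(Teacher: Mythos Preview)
Your approach is the paper's: classify the $q+1$ three-spaces through $\pi=\langle m_0,Q\rangle$, show that exactly one (namely $\Sigma_0=\langle m_0,m_1\rangle$) has type $T_3$ while the remaining $q$ have type $T_4$, extract conics $\C_1,\ldots,\C_q$ through $Q$, and prove that the points $X_i=\C_i\cap m_0$ are distinct and avoid $B_0$. Your sub-arguments are correct but noticeably heavier than the paper's. The paper begins with a one-line uniqueness observation: two distinct non-degenerate conics of $\K$ sharing two points span a $3$-space whose intersection with $\K$ contains two conics, and this matches none of $T_1,\ldots,T_5$. That single remark immediately gives injectivity of $i\mapsto X_i$ (if $X_i=X_j$ then $\C_i,\C_j$ share the two points $Q$ and $X_i$), replacing the whole case analysis of your third paragraph. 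Likewise, for $X_i\neq B_0$ the paper just observes that if $B_0\in\C_i$ then the $3$-space $\langle\C_i,b\rangle$ contains the two lines $b$ and $m_1$ (the latter because it contains both $B_1\in b$ and $Q\in\C_i$) together with the conic $\C_i$, again contradicting the type list; this replaces your more intricate argument via intersections $\pi_i\cap\langle m_0,m_k\rangle$ and $\pi_i\cap\langle m_1,m_k\rangle$. So the two proofs buy the same thing, but the paper's global ``two conics through two points is already a contradiction'' shortcut both streamlines the argument and delivers full uniqueness directly, whereas your version only literally shows uniqueness among the $\C_i$ and would still need a line to say that any conic of $\K$ through $P,Q$ arises as some $\C_i$.
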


\begin{proof} 
If there were two non-degenerate conics of $\K$ containing two points $P$ and $Q$ of $\K$, then the two conics lie in a 3-space which is not type $T_i$, $i\in\{1,\ldots,5\}$,   a contradiction. Hence $P,Q$ lie in at most one   non-degenerate conic of $\K$.

Let $m,m'$ be sticks of $\K$, and denote the baseline of $\K$ by $b$ (recall the baseline  meets every stick). 
Let $P\in m$, $Q\in m'$, with $P,Q\notin b$. We construct a non-degenerate conic containing $P$ and $Q$. 
Consider the plane $\pi=\langle m,Q\rangle$.  As $\pi$ contains at least the  line $m$ and  point $Q$ of $\K$, 
each 3-space containing $\pi$ is type $T_2$, $T_3$ or $T_4$.
The 3-space $\Pi_0=\langle\pi,b\rangle$ contains $m,m',b$ and so has type $T_3$. As the sticks partition the points of $\K$, there are no type $T_2$ 3-spaces containing $\pi$, and a unique type $T_3$ 3-space containing $\pi$, namely $\Pi_0$. 
 So the $q$ remaining 3-spaces $\Pi_1,\ldots,\Pi_q$ through $\pi$  are type $T_4$. That is, $\Pi_i$, $i=1,\ldots,q$,  meets $\K$ in the stick $m$ and a non-degenerate conic $\C_i$ which contains $Q$. Further, by Lemma~\ref{lem:line-conic-not-plane}, $\C_i$ meets the stick $m$ in a point, but $\C_i$ is not contained in $\pi$. 

Suppose for some $i\neq j$, the non-degenerate conics $\C_i,\C_j$ meet $m$ in the same point $X$, then 
$\C_i,\C_j$ both contain the points $P,X$, contradicting the first paragraph.
Hence each $\C_i$ meets $m$ in a distinct point. Now suppose some $\C_i$ contains the point $m\cap b$, 
Let $\Pi'$ be the 3-space containing $\C_i$ and $b$. As  $\Pi'$ contains two points of the stick $m'$ (namely $m'\cap b$ and $Q\in\C_i$), $\Pi'$ contains $m'$. 
Hence $\Pi'$ is a 3-space and $\Pi'\cap\K$ contains two lines $b,m'$ and a non-degenerate conic $\C_i$, 
 a contradiction. 
Hence each $\C_i$ meets $m$ in a distinct point which is not $m\cap b$. 
One of these points of $m$ is $P$, so one of the $\C_i$ contains both $P$ and $Q$.
That is, $P,Q$ lie in at least one non-degenerate conic of $\K$. We showed above that $P,Q$ lie in at most one non-degenerate conic of $\K$, hence  they lie in a unique non-degenerate conic of $\K$. 
\end{proof}

\begin{lemma}\Label{lem:projectivity-odd}
Let $\K$ be a set of $q^2+2q+1$ points in $\PG(4,q)$, $q\geq 3$,  such that every 3-space has type $T_i$, $i\in\{1,\ldots,5\}$. 
Then $\K$ is a scroll ruling the base line $b$ and a non-degenerate conic according to a projectivity. 
\end{lemma}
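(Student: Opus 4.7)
My strategy is to show that the bijection $\sigma : b \to \C$ induced by the sticks, with $\sigma(m_i \cap b) = m_i \cap \C$, is a projectivity. For $q = 3$ this is automatic: $\PGL(2,3) \cong S_4$ acts as the full symmetric group on the four points of $\PG(1,q)$, so every bijection between $b$ and $\C$ (both of size $4$) is realised by a projectivity. Henceforth assume $q \geq 4$.

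I first verify that the plane $\pi$ of any conic directrix $\C$ of $\K$ is disjoint from $b$: otherwise the stick through $b \cap \pi$ would lie in $\pi$, and 3-space counting of the kind used in the proof of Lemma~\ref{lem:line-conic-not-plane} produces a contradiction. I then fix coordinates so that $b = \{(1, t, 0, 0, 0)\}$, $\pi = \{x_0 = x_1 = 0\}$, and $\C = \{(0, 0, 1, s, s^2)\}$. Using ambient projectivities that fix $b$ and $\pi$ setwise (which act as $\PGL(2,q)$ independently on $b$ and on $\C$), I normalise three sticks $m_0, m_1, m_2$ so that $\sigma(t) = t$ at $t = 0, 1, \infty$.

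To show $\sigma$ is the identity everywhere, suppose some stick $m_u$ has $\sigma(u) = v \neq u$. Choose $P = (1, 0, 1, 0, 0) \in m_0$ and $Q = (1, u, 1, v, v^2) \in m_u$, both off $b \cup \C$. By Lemma~\ref{lem:unique-conic}, the unique conic $\C_{PQ} \subseteq \K$ through $P, Q$ meets $m_1$ at some $R$ and $m_2$ at some $S$. Coplanarity of $P, Q, R, S$ (they lie on a plane conic) gives a short case analysis: the branch placing $S$ on $b$ is ruled out by Lemma~\ref{lem:unique-conic}, forcing $R = (1,1,1,1,1)$ and $S = (0, u-v, 0, 0, v(v-1))$. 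Applying coplanarity to the intersection of $\C_{PQ}$ with a further stick $m_{u'}$ (available since $q \geq 4$) yields the relation $v(v-1)(u' - v') = (u - v)v'(v' - 1)$, so the ratio $K := v(v-1)/(u-v)$ is a common constant across all non-normalised sticks. Every stick-pair $(u_i, v_i)$ thus lies on the quadratic curve $v_i^2 + (K-1)v_i - K u_i = 0$. In odd characteristic, projecting to $u$ is a degree-$2$ polynomial map on $\PG(1,q)$, which is never injective, contradicting the bijectivity of $\sigma$. In characteristic $2$ bijectivity would force $K = 1$ and $\sigma$ to be the Frobenius $v \mapsto v^2$, but then $\K$ would be a Frobenius-ruled scroll whose 3-space sections include degree-$4$ curves, contradicting the hypothesis of type-$T_5$ (twisted cubic) sections.

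Hence $v = u$ for every $u$, so $\sigma$ is the identity and in particular a projectivity. Therefore $\K$ is ruled by $b$ and $\C$ via a projectivity, as desired. The main obstacle is the final algebraic step: balancing the derived quadratic relation against the bijectivity of $\sigma$ requires care, especially in characteristic $2$ where one must invoke the 3-space type hypothesis to exclude the Frobenius-ruled alternative.
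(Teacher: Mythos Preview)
Your approach is correct and genuinely different from the paper's. The paper projects $\K$ from a point $P$ on a stick onto a $3$-space $\Pi$, showing that the images of the sticks and of the $q$ conic directrices through $P$ assemble into the two rulings of a hyperbolic quadric in $\Pi$; since a hyperbolic quadric is ruled by a projectivity, this lifts back to the required projectivity between $b$ and $\C$. Your argument instead fixes coordinates, normalises three sticks via the $\PGL(2,q)\times\PGL(2,q)$ action on $(b,\C)$, and exploits a single conic $\C_{PQ}$ (which by Lemma~\ref{lem:unique-conic} meets \emph{every} stick) to force the quadratic relation $u'=\bigl(v'^2+(K-1)v'\bigr)/K$ on $\sigma^{-1}$, then rules this out by injectivity.

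The paper's projection argument handles all characteristics uniformly and needs no coordinates; your method is more elementary in odd characteristic but pays for it in characteristic $2$, where you must separately exclude the Frobenius-ruled scroll. Your final assertion there---that such a scroll has hyperplane sections which are degree-$4$ curves---is true (the surface $\{(\rho,\rho s^{2},\tau,\tau s,\tau s^{2})\}$ has degree $4$, and for instance the section $x_{1}+x_{2}=0$ gives $q+1$ points of which four are coplanar, so it is not a twisted cubic), but in a full write-up this step deserves an explicit verification rather than a one-line claim, particularly for small even $q$ where the identity $s^{q}=s$ makes the coordinate polynomials collapse over $\Fq$. Also note a minor slip: what you obtain is $\sigma^{-1}(v)=v^{2}$, i.e.\ $\sigma$ is the \emph{inverse} Frobenius, though this does not affect the conclusion.
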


\begin{proof}
Let $\K$ be a set of $q^2+2q+1$ points in $\PG(4,q)$, $q\geq 3$,  such that every 3-space has type $T_i$, $i\in\{1,\ldots,5\}$. 
By Lemma~\ref{lem:sticks}, $\K$ contains $q+1$ sticks which we denote $\{m_0,\ldots,m_q\}$. These sticks partition the points of $\K$ and are no three in a 3-space.  By Lemma~\ref{lem:base}, $\K$ contains a base line $b$ which meets each stick in a distinct point. By Lemma~\ref{lem:unique-conic}, $\K$ contains a non-degenerate conic $\C$ that meets each stick in a point, and does not meet $b$. So the points of $\K$ lie on $q+1$ lines $\{m_0,\ldots,m_q\}$ that are pairwise skew,  each joining a point of $b$ to a point of $\C$. That is, $\K$ is a set of lines that rules $b$ and $\C$. We show that this ruling is a projectivity by: in step 1 we show 
that we can project $\K$ from a point of $\K$ onto a hyperbolic quadric in a 3-space; then in step 2 we use this hyperbolic quadric to demonstrate the required projectivity.

Step 1:\ 
Let $P$ be a point on the stick $m_0$ with $P\notin b$. 
By Lemma~\ref{lem:unique-conic}, there is a unique non-degenerate conic of $\K$ through $P$ and any point of $\K$ not on $m_0$ or $b$; moreover, this conic meets $m_0$ in $P$ and does not meet $b$. 
As $|\K|=q^2+2q+1$, we obtain a set $\{\C_1,\ldots,\C_q\}$ of non-degenerate conics of $\K$ which each contain $P$, such that each point in 
 $\K\backslash\{ m_0\cup b\}$ lies in exactly one of these non-degenerate conics. 
 Denote the plane containing $\C_i$ by $\pi_i$. 
 If $\pi_i$ contains a point $X\in\K$ with $X\notin\C_i$, then $\pi_i$ contains the stick through $X$, contradicting Lemma~\ref{lem:line-conic-not-plane}. So $\pi_i\cap\K=\C_i$, $i=1,\ldots,q$, so in particular, $b\cap\pi_i=\emptyset$. 
 Let $t_1$ be any line of $\pi_1$ not through $P$, so $t_1$ is not a line of $\K$ and does not meet $b$. Consider  the 3-space $\Pi=\langle b,t_1\rangle$. 
 If $P\in\Pi$, then $\Pi$ contains the plane $\pi_1$, and so $b$ meets $\pi_1$, a contradiction, hence $P\notin\Pi$.

Let $\phi$ denote the projection of $\K$ from  $P$ onto $\Pi$. We have $\phi(b)=b$. The stick $m_0$ contains $P$ and so $\phi(m_0)$ is    the point $\phi(m_0)=M_0=m_0\cap b$. For $j=1,\ldots,q$, the stick $m_j$ is projected onto a line $s_j=\phi(m_j)$.
For $j=1,\ldots,q$, let $M_j=m_j\cap b$, and note that for $i\neq j$, we have $M_i\neq M_j$. Further, let $P_j=\C_1\cap m_j$ and $Q_j=PP_j\cap t_1$, so we have $s_j=M_jQ_j$. 

Suppose two lines $s_i,s_j$, $i\neq j$ meet in a point $Y$. As $M_i\neq M_j$, we have $Y\notin b$. If $Y\in t_1$, then the line $PY$ contains three points of $\C_1$, namely $P,P_i,P_j$, a contradiction as $\C_1$ is non-degenerate. So $Y\notin t_1$, and so $Q_i\neq Q_j$. Hence $\langle s_i,s_j\rangle$ is a plane containing two points of $b$ (namely $M_i,M_j$) and two points of $t_1$ (namely $Q_i,Q_j$). Hence $b,t_1$ meet, a contradiction. 
Thus $s_1,\ldots, s_q$ are pairwise skew lines.

Hence $\phi(\K)$ is a set of $q^2+q+1$ points, namely the points on the lines  $s_1,\ldots, s_q$, and the point $M_0$. 
 Recall that the base line $b$ lies in $\phi(K)$ and contains the point $M_0$ and meets each of $s_1,\ldots,s_q$ in a distinct  point $M_i=s_i\cap b=m_i\cap b$. 

Now consider the $q$-arcs $\C_i\setminus \{P\}$, $i=1,\ldots,q$.
We have $\phi(\C_i\setminus\{P\})=\langle P, \ \C_i\setminus\{P\}\rangle\cap\Pi$, and so  $\phi(\C_i\setminus\{P\})$ is a set of $q$ points on the line 
$t_i=\pi_i\cap\Pi$.  By Lemma~\ref{lem:unique-conic},  $\C_i$, $i=1,\ldots,q$, meets the stick $m_j$, $j=1,\ldots,q$ in point not on $b$. Hence in $\phi(\K)$, the line $t_i$ meets the line $s_j$ in a point not on $b$, $i,j=1,\ldots,q$.  The $q+1$ lines $\{b,t_1,\ldots,t_q\}$ are mutually skew, and contain the $q^2+q+1$ points of $\phi(\K)$, and $q$ additional points, one  on each line $t_i$. The $q$ lines $\{s_1,\ldots,s_q\}$ are mutually skew, and cover the $q^2+q$ points of $\phi(\K)\setminus \{M_0\}$. Hence $\R=\{b,t_1,\ldots,t_q\}$ is a regulus, with opposite regulus $\R'$ containing $s_1,\ldots,s_q$. As $q\geq 3$,  there is  a unique line $s_0$ in $\R'$ through $M_0$ which meets $t_1,\ldots,t_q$. 
 Thus we can project  $\K$ onto $q^2+q+1$ points of the  hyperbolic quadric $\R\cup\R'$.

 Step 2:\ 
 We showed in step 1 that we can project  $\K$ onto $q^2+q+1$ points of the hyperbolic quadric $\R\cup\R'$. 
 For  $i\in\{1,\ldots,q\}$, consider the stick $m_i$ which joins the point $M_i\in b$ with the point $m_i\cap\C_1$. 
 This maps under $\phi$ to the line $\phi(m_i)=s_i$, joining the point $\phi(M_i)=M_i $ with the point $\phi(m_i\cap\C_1)=s_i\cap t_1$. 
 A hyperbolic  quadric is ruled by a projectivity, in particular, it is ruled by a projectivity from $b$ to $t_1$. That is,  there is a projectivity that maps the point $M_i\in b$ to the point $s_i\cap t_1$, $i=0,\ldots,q$. The projection of $\K$ onto $\Pi$ preserves this projectivity, so we have a projectivity mapping the points of $b\setminus\{M_0\}$ to the points of $\C_1\setminus\{P\}$ which maps $m_i\cap b$ to $m_i\cap \C_1$, $i=1,\ldots,q$. This projectivity will also map $M_0$ to $P$. That is, the sticks of $\K$ are lines of a scroll ruling the line $b$ and the non-degenerate conic $\C_1$ 
 according to a projectivity. 
\end{proof}

{\bfseries Proof of Theorem~\ref{char1}}
Let $\K$ be a set of $q^2+2q+1$ points in $\PG(4,q)$, $q\geq 3$, such that every 3-space has type $T_i$, $i\in\{1,\ldots,5\}$. 
By Lemma~\ref{lem:projectivity-odd}, $\K$ is a scroll that rules a  line and a non-degenerate conic according to a projectivity. 
 Hence  by \cite{BV},  $\K$ is a ruled cubic surface.    \hfill$\square$

{\bfseries Proof of Corollary~\ref{corBB}}
Let $\K$ be a set of $q^2+2q+1$ points in $\PG(4,q)$, $q\geq 3$, such that every 3-space has type $T_i$, $i\in\{1,\ldots,5\}$, then by Theorem~\ref{char1}, $\K$ is a ruled cubic surface. 
Let $\si$ be a 3-space that meets $\K$ in exactly the line directrix (baseline) $b$. For each conic 
 $\C_i$, $i=1,\ldots,q^2$ contained in  $\K$, let $\ell_i$ be the line in $\si$ that lies in the plane 
 containing $\C_i$.
By \cite[Theorem 2.1]{ReyCath}, the lines $\ell_i$, $i=1,\ldots,q^2$ together with $b$ form 
   a  regular spread $\S$, moreover   $\K$ corresponds via the Bruck-Bose representation to a tangent Baer subplane of $\P(\S)\cong\PG(2,q^2)$.
   \hfill$\square$

 \section{Proof of Theorem~\ref{inter1}}\Label{sec4}

An $\li$-\emph{Baer pencil} of $\PG(2,q^2)$ has the following nice representation in the Bruck-Bose setting.

 \begin{result} \cite{BJW1}\Label{Bpencil}
 An $\li$-Baer pencil of $\PG(2,q^2)$ with vertex $L$ corresponds in the $\PG(4,q)$ Bruck-Bose  representation
 to a 3-space of $\PG(4,q)$ containing the spread line $[L]$. 
Conversely if $\Pi$ is a $3$-space in $\PG(4,q)$ distinct from $\si$, then $\Pi$ corresponds in $\PG(2,q^2)$ to an $\li$-Baer pencil with vertex corresponding to the unique spread line in  $\Pi$. 
\end{result}

We first show how an $\li$-Baer pencil   meets a  Baer subplane  of $\PG(2,q^2)$ tangent to $\li$. 

\begin{lemma}\Label{thm3RL} Let $\B$ be a  Baer subplane in $\PG(2,q^2)$ tangent to $\li$ at the point $ T=\B\cap\li$. Then an $\li$-Baer pencil with vertex $L\in\li$ meets $\B$  in either:  the point $T$; one or two Baer sublines containing $T$; two Baer sublines with one containing $T$ and  one contained in a line through $L$; or a non-degenerate $\Fq$-conic of $\B$. 
\end{lemma}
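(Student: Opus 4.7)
The plan is to translate via the Bruck--Bose representation to $\PG(4,q)$ and apply the classification of $3$-space sections of a ruled cubic surface. Let $\K=[\B]$; by the background in Section~\ref{sec2}, $\K$ is a ruled cubic surface with line directrix $b=[T]$. The relevant dictionary is: each stick of $\K$ (a line through a point of $b$ with $q$ further affine points) corresponds to a Baer subline of $\B$ through $T$; each conic directrix $\C$ lies in a plane $\pi_\C$ meeting $\si$ in a spread line $\ell_\C\neq b$ disjoint from $\C$, and corresponds to an affine Baer subline of $\B$ contained in the line of $\PG(2,q^2)$ through the point $M\in\li$ with $[M]=\ell_\C$; and a twisted cubic section of $\K$ (whose $q+1$ points are one point on $b$ and $q$ affine points) corresponds to a non-degenerate $\Fq$-conic of $\B$ through $T$. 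By Result~\ref{Bpencil}, the $\li$-Baer pencil with vertex $L$ corresponds to a $3$-space $\Pi\neq\si$ whose unique spread line is $[L]$, and its intersection with $\B$ corresponds to $\Pi\cap\K$; by \cite[Lemma 2.1]{quinn-conic} this intersection has one of the five types $T_1,\dots,T_5$ of Definition~\ref{T1}.

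I then case-split on whether $L=T$. If $L=T$, then $b\subset\Pi$; since $\Pi\cap\si$ is a plane, it cannot contain two disjoint spread lines, so no further spread line---and hence no conic directrix---lies in $\Pi$. Thus $\Pi\cap\K$ is $b$ together with the sticks contained in $\Pi$; Lemma~\ref{lem:sticks} allows at most two such sticks, producing cases (i), (ii), or (iii) of the lemma (namely the point $T$, or one or two Baer sublines through $T$). If $L\neq T$, then $b\not\subset\Pi$ and $\Pi\cap b=\{X\}$ is a single point; only the stick $m_X$ through $X$ can lie in $\Pi$, and only the unique conic directrix $\C_L$ whose plane's spread line equals $[L]$ can be contained in $\Pi$. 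This rules out types $T_2$ and $T_3$. Type $T_1$ gives $\Pi\cap\K=m_X$, yielding case (ii); type $T_4$ gives $\Pi\cap\K=m_X\cup\C_L$ (meeting in a point by Lemma~\ref{lem:line-conic-not-plane}), yielding case (iv); type $T_5$ gives $\Pi\cap\K$ a twisted cubic necessarily through $X$, yielding case (v).

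The main obstacle is verifying the final dictionary item: that a twisted cubic section of $\K$ through a point of $b$ corresponds in $\PG(2,q^2)$ to a non-degenerate $\Fq$-conic of $\B$. I expect this to follow from a standard Bruck--Bose identification of $\Fq$-conics of a tangent Baer subplane with twisted cubic hyperplane sections of the associated ruled cubic surface, or failing that from a direct projection/degree argument using that $\B\cong\PG(2,q)$ carries its usual conic structure and that the $q+1$ points arising from $\Pi\cap\K$ are not collinear in $\B$ (which rules out the degenerate alternative). The remaining bookkeeping---that each listed type actually arises, and that the affine Baer subline in case (iv) is genuinely contained in the line through $L$---is immediate from the explicit correspondences recorded above.
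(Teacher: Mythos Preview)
Your approach is essentially the paper's: pass to Bruck--Bose, identify $[\B]$ with a ruled cubic surface having directrix $[T]$, invoke the five-type classification of hyperplane sections from \cite[Lemma~2.1]{quinn-conic}, and translate each type back to $\PG(2,q^2)$. The paper proceeds more directly, simply running through the five section types without your $L=T$ versus $L\neq T$ split; in each case the structural facts you re-derive (e.g.\ that the single line in type $T_1$ is the directrix, that in types $T_2,T_3$ the extra lines are generators through $[T]$, and that the conic in type $T_4$ lies in a plane about a spread line and hence yields a Baer subline on a line through $L$) are read off from \cite[Lemma~2.1]{quinn-conic} rather than argued afresh.

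Your acknowledged obstacle---that a twisted cubic section corresponds to a non-degenerate $\Fq$-conic of $\B$---is handled in the paper by a direct citation to \cite[Theorem~3.1]{quinn-conic}. Your proposed fallback (``the $q+1$ points are not collinear in $\B$, which rules out the degenerate alternative'') is not a valid argument: non-collinearity of $q+1$ points in $\B\cong\PG(2,q)$ in no way forces them onto a conic, so an external result really is required here. A minor point: you invoke Lemmas~\ref{lem:line-conic-not-plane} and~\ref{lem:sticks}, but those are proved for abstract point-sets satisfying the hypotheses of Theorem~\ref{char1}; for a genuine ruled cubic surface the corresponding facts are standard and come packaged in \cite[Lemma~2.1]{quinn-conic}, which is what the paper cites.
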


\begin{proof}
Let $\B$ be a  Baer subplane in $\PG(2,q^2)$ tangent to $\li$ at the point $ T=\B\cap\li$, so by Result~\ref{ReyCath},  in $\PG(4,q)$, $[\B]$ is a ruled cubic surface with line directrix the spread line $[T]$. By Result~\ref{Bpencil},  an $\li$-Baer pencil with vertex $L$ corresponds to a 3-space that meets $\si$ in the spread line $[L]$. 
By \cite[Lemma 2.1]{quinn-conic}, a hyperplane of $\PG(4,q)$ meets a ruled cubic surface in one of the following: (a) 1 line; (b) 2 lines, (c) 3 lines, (d) a line and a non-degenerate conic, (e) a twisted cubic. In case (a), the line is $[T]$, and so the corresponding $\li$-Baer pencil meets $\B$ in the point $T$. In case (b), the lines are $[T]$ and one generator which meets $[T]$, so the corresponding $\li$-Baer pencil meets $\B$ in a Baer subline through $T$.
 In case (c), the lines are $[T]$ and two generators which meet $[T]$, so the corresponding $\li$-Baer pencil meets $\B$ in two Baer sublines through $T$. In case (d), the line is a generator, and by \cite[Lemma 2.1]{quinn-conic}, the non-degenerate conic corresponds to a Baer subline disjoint from $\li$,  so the corresponding $\li$-Baer pencil meets $\B$ in two Baer sublines, one containing $T$, and one contained in a line through $L$.
In case (e),  by \cite[Theorem 3.1]{quinn-conic}, the twisted cubic corresponds to a non-degenerate conic in $\B$ which contains the point $T$.
\end{proof}
 
 We now prove that the converse holds.

 Let $\K$ be a set of $q^2+2q+1$ points in $\PG(4,q)$. Recall that in Definition~\ref{T1} we defined five types of 3-spaces with respect to $\K$. Here we define three further types of 3-spaces.
 
 \begin{defn} Let $\K$ be a set of $q^2+2q+1$ points in $\PG(4,q)$. Let $\Pi$ be a 3-space, we say $\Pi$ has type
\begin{itemize}
\item[$T_6$:]   if $\Pi\cap\K$ is one point,
\item[$T_7$:]   if $\Pi\cap\K$ is a non-degenerate conic and two lines.
\item[$T_8$:]   if $\Pi\cap\K$ is a line and a twisted cubic.
\end{itemize}
\end{defn}

\begin{lemma}\Label{lem:8-types} Let $\K$ be a set of $q^2+2q+1$ points in $\PG(4,q)$, $q\geq 5$,  such that every 3-space has type $T_i$, $i\in\{1,\ldots,8\}$, then  every 3-space has type $T_i$, $i\in\{1,\ldots,5\}$. 
\end{lemma}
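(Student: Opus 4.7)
My plan is to prove the statement by contradiction, ruling out 3-spaces of type $T_6$, $T_7$, and $T_8$ in turn. In each case the tool is the same: partition $\PG(4,q)\setminus\pi$ by the $q+1$ 3-spaces through a carefully chosen plane $\pi$, and count points of $\K$ against the global bound $|\K|=q^2+2q+1$, using that only eight types of 3-space intersection are permitted.

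I would begin with $T_7$. Suppose $\Pi$ has type $T_7$, so $\Pi\cap\K=\C\cup\ell_1\cup\ell_2$ with $\C$ a non-degenerate conic in some plane $\pi_0$. The cleanest subcase is when both $\ell_1,\ell_2\subset\pi_0$: then $\pi_0\cap\K\supset\C\cup\ell_1\cup\ell_2$, and every 3-space $\Pi'\supset\pi_0$ satisfies $\Pi'\cap\K\supset\C\cup\ell_1\cup\ell_2$. Inspecting the eight allowed types, only $T_7$ admits an intersection with $\K$ that contains a non-degenerate conic and two lines (in $T_1,T_2,T_3,T_6$ no conic appears; $T_4,T_8$ contain only one line; $T_5$ is an irreducible twisted cubic). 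Hence every such $\Pi'$ is of type $T_7$ with $\Pi'\cap\K=\pi_0\cap\K\subset\pi_0$, forcing $\K\subset\pi_0$ and contradicting $|\K|>q^2+q+1=|\pi_0|$. In the subcases where only one or neither line lies in $\pi_0$, I would repeat the type-forcing argument using the plane containing $\C$ together with $\ell_i\cap\pi_0$, reducing to the same contradiction.

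Next I would eliminate $T_8$. Suppose $\Pi$ has line $\ell$ and twisted cubic $\gamma$ (so $\gamma$ spans $\Pi$ and $|\ell\cap\gamma|\le 3$). Choosing a plane $\pi\subset\Pi$ through $\ell$ gives $|\pi\cap\K|\le q+4$, and each of the $q+1$ 3-spaces through $\pi$ contains $\ell$, hence has intersection with $\K$ of a type whose structure must carry a line. Using the partition identity $\sum_{\Pi'\supset\pi}|\Pi'\cap\K|=|\K|+q\,|\pi\cap\K|$ together with the upper bounds on $|\Pi'\cap\K|$ in each admissible type, I would deduce that some other 3-space through $\pi$ must also carry a twisted cubic or an additional conic. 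A configuration argument (the two twisted cubics would have to share $\pi\cap\gamma$ and $\ell$, which a twisted cubic cannot accommodate) produces the contradiction.

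The $T_6$ case is the most delicate, and I expect it to be the main obstacle. Suppose $\Pi\cap\K=\{P\}$; with $T_7$ and $T_8$ already excluded, every other 3-space has type in $\{T_1,\ldots,T_6\}$, and the bound $|\Pi'\cap\K|\le 3q+3$ (achieved only by $T_3$ with three pairwise skew lines forming a regulus) is not by itself inconsistent with $|\K|=q^2+2q+1$. The sharpening I would use is structural: for a plane $\pi\subset\Pi$ avoiding $P$ we have $\pi\cap\K=\emptyset$, so no line or conic of $\K$ in any $\Pi'\supset\pi$ can lie in $\pi$; this severely constrains how $T_3$ 3-spaces (and lines of $\K$ generally) distribute among the pencil through $\pi$. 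Combining this with a second count using a plane of $\Pi$ through $P$ (so $\pi'\cap\K=\{P\}$), and using the hypothesis $q\ge 5$ to beat the residual slack, yields the final contradiction and completes the proof.
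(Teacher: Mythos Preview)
Your proposal has a genuine gap in the $T_6$ case, and the $T_8$ argument is not actually carried out.

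\textbf{The $T_6$ case is the easiest, not the hardest.} You correctly pick a plane $\pi\subset\Pi$ with $P\notin\pi$, so $\pi\cap\K=\emptyset$. But your observation that ``no line or conic of $\K$ in any $\Pi'\supset\pi$ can lie in $\pi$'' is trivially true and not what is needed. The decisive point is much stronger: any line contained in a 3-space $\Pi'\supset\pi$ must meet $\pi$ (dimension count in a 3-space), so if $\pi\cap\K=\emptyset$ then $\Pi'\cap\K$ contains \emph{no line of $\K$ at all}. This rules out types $T_1,T_2,T_3,T_4,T_7,T_8$ immediately, leaving only $T_5$ and $T_6$ for the $q+1$ 3-spaces through $\pi$. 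With $x$ of type $T_5$ and $y$ of type $T_6$ one gets $x+y=q+1$ and $x(q+1)+y=q^2+2q+1$, whose only solution is $y=0$, contradicting $y\ge 1$. No second count, no structural analysis, and no use of $q\ge 5$ is needed here.

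\textbf{The $T_8$ argument is only a sketch and the chosen plane makes it hard.} Taking $\pi$ through $\ell$ gives $|\pi\cap\K|$ as large as $q+4$, and the pencil through $\pi$ then allows many types ($T_1,\ldots,T_4,T_8$ once $T_7$ is gone), so the partition identity alone does not force anything; your ``configuration argument'' is not supplied. The paper instead chooses $\pi$ to be the plane through three points of the twisted cubic not on $\ell$, so that $|\pi\cap\K|=4$ exactly (three non-collinear cubic points plus $\ell\cap\pi$). For $q\ge 5$ a plane section of size exactly $4$ is impossible for types $T_1,\ldots,T_6$ (each gives either $\le 3$ or $\ge q+1$ points in a plane), so every 3-space through $\pi$ is of type $T_7$ or $T_8$; with $T_7$ already excluded, all are $T_8$ and contain at least $2q$ points of $\K$, whence $q^2+2q+1\ge 4+(q+1)(2q-4)$, i.e.\ $q(q-4)\le 1$, the desired contradiction for $q\ge 5$.

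Your $T_7$ outline is in the right spirit, but the ``other subcases'' need the actual count: when neither line lies in the plane of $\C$, one has $|\pi\cap\K|=q+1+t$ with $t\in\{0,1,2\}$, and comparing $T_4$ versus $T_7$ contributions in the pencil through $\pi$ yields $y(q-1)+t\le 0$, forcing $y=0$.
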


\begin{proof} 
Suppose there is a 3-space $\Pi$ of type $T_6$, and let $P=\Pi\cap\K$.  Let $\pi$ be a plane contained in $\Pi$ with $P\notin\pi$, so $|\pi\cap\K|=0$. Denote the $q+1$ 3-spaces containing $\pi$ by  $\Pi,\Pi_1,\ldots,\Pi_q$. As $|\pi\cap\K|=0$, $\Pi_i\cap\K$ cannot contain a line. Hence each $\Pi_i$, $i=1,\ldots,q$ has type $T_5$ or $T_6$. Let $x$ be the number of type $T_5$ 3-spaces containing $\pi$ and let $y$ be the number of type $T_6$ 3-spaces containing $\pi$, so $y\geq 1$. Then $x+y=q+1$, and counting the points in $\K$ we have $x(q+1)+y=q^2+2q+1$. These equations have   solution $x=q+1$, $y=0$, a contradiction, hence there is no 3-space of type $T_6$. 

Suppose there is a 3-space $\Pi$ of type $T_7$, 
so $\Pi\cap\K$ is a non-degenerate conic $\C$ and two lines $\ell,m$. 
Let $\pi$ be the plane of $\Pi$ containing the non-degenerate conic  $\C$. 
If $\pi$ contains both $\ell$ and $m$, then every 3-space containing $\pi$ is of type $T_7$, and contains no further point of $\K$, hence $|\K|\leq (q+1)+(q+1)+q$, a contradiction. If $\pi$ contains $\ell$ and meets $m$ in a point, then every 3-space containing $\pi$ is of type $T_7$, and contains  $q$ points of $\K$ not in $\pi$, so $|\K|\geq (q+1)+(q-1)+q(q+1)$, a contradiction. Hence both  $\ell$ and $m$ meet $\pi$ in a point, and  
  $|\pi\cap\K|=q+1+t$ for some $t\in\{0,1,2\}$.  Denote the $q+1$ 3-spaces containing $\pi$ by  $\Pi,\Pi_1,\ldots,\Pi_q$. As $\pi$ contains a non-degenerate conic of $\K$, each of these 3-spaces has type $T_4$ or $T_7$. 
Let $x$ be the number of 3-spaces containing $\pi$ of type $T_4$, 
$y$  the number of 3-spaces containing $\pi$ of type $T_7$, 
so $x+y=q+1$ and $y\geq 1$. 
A 3-space of type $T_4$ containing $\pi$ contains  $q$ points of $\K$ that are not in $\pi$.  
A 3-space of type $T_7$  containing $\pi$  contains at least  $2q-1$ points of $\K$ that are not in $\pi$.  Counting points of $\K$ gives 
$|\K|\geq xq+y(2q-1)+(q+1+t)$, hence $y(q-1)+t\leq0$. As $q\geq 3$, we have $y=t=0$,  contradicting $y\geq1$. 
Hence there is no 3-space of type $T_7$.

Suppose there is a 3-space $\Pi$ of type $T_8$, so $\Pi\cap\K$ is a line $\ell$ and a twisted cubic $\N_3$.
The line $\ell$ contains at most two points of $\N_3$, and each plane of $\Pi$ contains at most three points of $\N_3$. As $q\geq4$, there are at least three points of $\N_3$ not on $\ell$. 
Let $\pi$ be the plane containing these three points, so $\pi\cap\ell\notin\N_3$. 
 As $\pi\cap\ell$ is a point in $\K$, we have 
 $|\pi\cap\K|=4$. 
 Let $\Sigma$ be any 3-space containing $\pi$. As $|\pi\cap\K|=4$, $\Sigma$ has type $T_7$ or $T_8$. As we have shown that there are no type $T_7$ 3-spaces, each 3-space containing $\pi$ has type $T_8$. Hence $|\Sigma\cap\K|\geq 2q$. 
Counting points of $\K$ in the $q+1$ 3-spaces containing $\pi$ gives 
$(q^2+2q+1)\geq 4+(q+1)(2q- 4)$, and so $q(q-4)\leq 1$, a contradiction as $q\geq5$.
Hence there is no 3-space of type $T_8$. 
\end{proof}

\begin{lemma}\Label{thm3LR}
Let $\B$ be a set of $q^2+q+1$ points in $\PG(2,q^2)$, $q\geq5$, with $T=\B\cap\li$ a point. Suppose every $\li$-Baer pencil meets $\B$ in either a point; one or  two Baer sublines; or a non-degenerate $\Fq$-conic.
Then $\B$ is a Baer subplane. 
\end{lemma}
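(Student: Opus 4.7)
The plan is to translate the hypothesis into the Bruck-Bose representation in $\PG(4,q)$ and verify that $[\B]$ meets the assumption of Corollary~\ref{corBB}. First observe that $[\B]$ consists of the $q^2+q$ affine points of $\B\setminus\{T\}$ together with the spread line $[T]$, so $|[\B]|=q^2+2q+1$, matching the required count; the goal is then to show that every 3-space of $\PG(4,q)$ meets $[\B]$ in one of the types $T_1,\ldots,T_5$.

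A preliminary observation is that every plane of $\si$ contains exactly one spread line of $\S$, since two spread lines of a regular spread are disjoint and so cannot lie in a common plane, while $\S$ partitions the points of $\si$. Consequently, for every 3-space $\Pi\neq\si$, the plane $\Pi\cap\si$ contains a unique spread line $[L]$, and by Result~\ref{Bpencil}, $\Pi$ corresponds to the $\li$-Baer pencil with vertex $L$. The remaining case $\Pi=\si$ is immediate: $[\B]\cap\si=[T]$, which is type $T_1$.

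For each $\Pi\neq\si$ I would perform a case analysis on the four possibilities allowed by the hypothesis for the intersection of $\B$ with the Baer pencil, splitting each case further on whether $L=T$ (so $[T]\subseteq\Pi$ contributes a full line to $[\B]\cap\Pi$) or $L\neq T$ (so only the point $P_T=[T]\cap\Pi$ is contributed). Using the standard Bruck-Bose dictionary—a tangent Baer subline at $T$ becomes an affine line meeting $[T]$; a Baer subline contained in an affine line through $L$ but disjoint from $\li$ becomes an affine conic in the plane through $[L]$ disjoint from $[L]$; and (cf.~\cite{quinn-conic}) a non-degenerate $\Fq$-conic tangent to $\li$ at $T$ becomes a twisted cubic meeting $[T]$—each resulting subcase yields an intersection of one of the types $T_1,\ldots,T_8$. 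Lemma~\ref{lem:8-types} then rules out types $T_6,T_7,T_8$, so every 3-space has type $T_1,\ldots,T_5$, and Corollary~\ref{corBB} applied to $[\B]$ delivers that $\B$ is a tangent Baer subplane of $\PG(2,q^2)$.

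The main obstacle is the case analysis itself, and in particular verifying the twisted-cubic subcase: when $L\neq T$ and the Baer pencil meets $\B$ in a non-degenerate $\Fq$-conic, one must check that the $q\geq 5$ affine points of the corresponding cubic already span the 3-space $\Pi$ (since any four points of a twisted cubic are in general position), which forces the cubic's unique point on $[T]$ to equal $P_T$ and so places the entire cubic inside $\Pi$, producing type $T_5$ rather than a lower-dimensional configuration.
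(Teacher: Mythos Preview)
Your outline matches the paper exactly through the case analysis and the appeal to Lemma~\ref{lem:8-types}, but the final step---invoking Corollary~\ref{corBB}---hides a real gap. Corollary~\ref{corBB} asserts only that \emph{some} regular spread $\S'$ in \emph{some} hyperplane $\si'$ exists for which $[\B]$ represents a tangent Baer subplane in $\P(\S')$. It does not say that this spread coincides with the spread $\S$ you started from. Concretely, the spread produced in the proof of Corollary~\ref{corBB} is built from the planes of the conic directrices of the ruled cubic surface; nothing you have proved so far forces those planes to contain lines of the \emph{original} spread $\S$. So you have shown that $[\B]$ is a ruled cubic surface, but not that $\B$---the pre-image of $[\B]$ under the original Bruck-Bose correspondence---is a Baer subplane.

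The paper closes this gap by abandoning Corollary~\ref{corBB} at this point and instead applying Result~\ref{ReyCath} directly: one must verify that the extension of $[\B]$ to $\PG(4,q^2)$ contains the transversal lines $g,g^q$ of the original spread $\S$. This requires a second pass through the hypothesis. For each spread line $[L]\neq[T]$, take a 3-space through $[L]$; its intersection with $[\B]$ is either a generator plus a conic, or a twisted cubic. The hypothesis on $\li$-Baer pencils then identifies the conic as the image of a genuine Baer subline of $\PG(2,q^2)$ on a secant through $L$ (respectively, the twisted cubic as the image of a genuine $\Fq$-conic through $T$), and it is precisely this identification that forces the extension to pass through $g\cap[L]$ and $g^q\cap[L]$. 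Without this step your argument shows only that $[\B]$ is projectively a ruled cubic surface, not that it is correctly positioned relative to $\S$.
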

 
 \begin{proof}
 Let $\B$ be a set of $q^2+q+1$ points in $\PG(2,q^2)$, $q\geq5$, with $T=\B\cap\li$ a point.  
  We  work in the Bruck-Bose representation. So $\PG(2,q^2)$ is represented in $\PG(4,q)$, and the representation  involves a regular 1-spread $\S$ in the hyperplane at infinity $\si$. 
 In the Bruck-Bose representation in $\PG(4,q)$, $[\B]$ is a set of $q^2+q$ affine points and the spread line $[T]$, so $[\B]$ is a set of $q^2+2q+1$ points. 
 
 Let $[\D]$ be any 3-space distinct from $\si$. Then $[\D]$ contains a unique spread line $[P]$. 
By \cite{BJW1}, $[\D]$ corresponds to an $\li$-Baer pencil $\D$ in $\PG(2,q^2)$ with vertex $P$ (possibly $P=T$). 
As $\D$ contains $\li$, $T\in\D$, so $T\in\B\cap\D$. So
by assumption  
 $\B\cap\D$
 is either a point; one or  two Baer sublines; or a non-degenerate conic. 
 As each $\li$-Baer pencil contains $T$, we have four cases: a 
 $\li$-Baer pencil meets $\B$ in either (a) the point $T$; (b) one Baer subline  containing  $T$; (c) two Baer sublines (at least one containing $T$); or (d) a non-degenerate 
 $\Fq$-conic  containing $T$. We show that in each case, $[\B]\cap[\D]$ is type $T_i$ for some $i\in\{1,\ldots,8\}$.

Case (a): suppose $\B\cap\D$ is the point $T$. If $P=T$, then in $\PG(4,q)$, $[\B]$ meets $[\D]$ in the spread line $[T]$, and so   $[\D]$ is a 3-space of type $T_1$. 
 If $P\neq T$, then $[\B]$ meets $[\D]$ in one point of $[T]$, and so   $[\D]$ is a 3-space of type $T_6$.

Case (b): suppose $\B\cap\D$ is a Baer subline $n$ through  $T$. Then in $\PG(4,q)$, $n$ corresponds to a line $[n]$ that meets $\si$ in a point of $[T]$. Hence if $P=T$, then
 $[\B]$ meets $[\D]$ in two lines, namely the spread line $[T]$ and the line $[n]$. That is,  $[\D]$ is a 3-space of type $T_2$. However,  if $P\neq T$, then
 $[\B]$ meets $[\D]$ in one line, namely  $[n]$, and so  $[\D]$ is a 3-space of type $T_1$. 

Case (c): suppose $\B\cap\D$ is two Baer sublines $m,n$, with $T\in m$. 
  The line $m$ contains $T$ and so corresponds to a line $[m]$ of $\PG(4,q)$ that meets $\si$ in a point. If $T\in n$, then $[n]$ is  a line  of $\PG(4,q)$ that meets $\si$ in a point. If $T\notin n$, then $[n]$ is  a conic  of $\PG(4,q)$.  
  Hence if $P\neq T$, then $[\B]$ meets $[\D]$ in either  two lines $[n],[m]$ (and so  $[\D]$ is a 3-space of type $T_2$); or in  a conic and  a line $[m]$ (and so  $[\D]$ is a 3-space of type $T_4$). 
If $P=T$, then
$[\B]$ meets $[\D]$ in: either  three lines, namely the spread line $[T]$ and the lines $[n],[m]$, and $[\D]$ is a 3-space of type $T_3$; 
or in  a conic and  two lines $[T]$, $[m]$, and $[\D]$ is a 3-space of type $T_7$.

Case (d): suppose $\B\cap\D$ is a non-degenerate $\Fq$-conic  $\C$. Note that as $T\in\D$, we have $T\in\C$. By \cite{quinn-conic}, in $\PG(4,q)$, $[\C]$ is a twisted cubic that meets $[T]$ in a point. Hence  in $\PG(4,q)$, if $P=T$,  
 $[\B]$ meets $[\D]$ in a line and a twisted cubic.  That is,  $[\D]$ is a 3-space of type $T_8$. If $P\neq T$, then  $[\B]$ meets $[\D]$ in a  twisted cubic, and so   $[\D]$ is a 3-space of type $T_5$. 

Hence we have shown that  every 3-space of $\PG(4,q)$ has type $T_i$ for some $i\in\{1,\ldots,8\}$. 
So by Lemma~\ref{lem:8-types}, every 3-space has type $T_i$ for some $i\in\{1,\ldots,5\}$. 
Hence by Theorem~\ref{char1}, $[\B]$ is a ruled cubic surface of $\PG(4,q)$. 

By Result~\ref{ReyCath}, $[\B]$ corresponds to a Baer subplane of $\PG(2,q^2)$ if and only if the extension of $[\B]$ to $\PG(4,q^2)$ contains the transversal lines  of the regular spread $\S$.
%
%
%
%
Let $[L]$ be a spread line distinct from $[T]$, and $[\D]$   a 3-space containing $[L]$.
Denote the    transversals lines of $\S$ by $g,g^q$, so the extension of $[L]$ meets the transversal lines $g$ and $g^q$ in points we denote $g\cap [L]$, $g^q\cap [L]$. We need to show that these two points lie in the extension of $[\B]$. 

 By assumption, $[\D]$ does not contain $[T]$. So by  \cite[Lemma 2.1]{quinn-conic},  $[\D]$  meets the ruled cubic surface $[\B]$ in either  (i) a non-degenerate conic disjoint from $[T]$ and a generator line of $[\B]$, or (ii)  a twisted cubic with a point in $[T]$. 
   Correspondingly in $\PG(2,q^2)$, $\D$ is an $\li$-Baer pencil with vertex $L\neq T$.

 First we look at case (i). The generator line in $[\D]\cap[\B]$ corresponds in $\PG(2,q^2)$ to a Baer subline through $T$. So by assumption,  $\D\cap\B$ is two Baer sublines. 
 The conic  in $[\D]\cap[\B]$ 
 is a set of $q+1$ affine points, no two on a line with $T$, and so  corresponds to a Baer subline  $b$ that lies in a line $m$ with $m\cap \li \neq T$.  In $\PG(4,q)$,  $[b]$ is a non-degenerate conic lying in a plane about a spread line. As the only spread line in $[\D]$ is $[L]$, the conic $[b]$ lies in a plane about $[L]$.
    Moreover, the   extension of  the conic $[b]$ to $\PG(4,q^2)$ contains the two points    
    $g\cap [L]$, $g^q\cap [L]$, see \cite{UnitalBook}.   Hence the extension of the ruled cubic surface $[\B]$ contains the two points $g\cap [L]$, $g^q\cap [L]$. 
    
    In case (ii),   the twisted cubic in $[\D]\cap[\B]$ contains a point of $[T]$, so contains $q$ affine points no two on a line through $T$. Hence 
    by assumption,  $\D\cap\B$
is an $\Fq$-conic $\O$ which contains the point  $T$.  By  \cite{BJW1}, $[\O]$ corresponds in $\PG(4,q)$ to a 
 twisted cubic  $[\O]$ whose extension to $\PG(4,q^2)$ contains the two points  $g\cap [L]$, $g^q\cap [L]$. Hence the extension of the ruled cubic surface contains the  two points  $g\cap [L]$, $g^q\cap [L]$. 
 
Hence for each line $[L]$ in the spread $\S$ distinct from $[T]$, the points  $g\cap [L]$, $g^q\cap [L]$ lie in the extension of 
  the ruled cubic surface $[\B]$, and so $g$, $q^q$ are lines of the extended ruled cubic surface. Thus by Result~\ref{ReyCath}, $\B$ is a tangent Baer subplane of $\PG(2,q^2)$, as required. 
\end{proof}
%
%
%
%
%

Theorem~\ref{inter1} now follows immediately from Lemmas~\ref{thm3RL} and~\ref{thm3LR}.

\end{document}